\providecommand{\tabularnewline}{\\}
\numberwithin{equation}{section}
\theoremstyle{plain}
\newtheorem{thm}{\protect\theoremname}[section]
  \theoremstyle{definition}
  \newtheorem{example}{\protect\examplename}[section]
  \theoremstyle{plain}
  \newtheorem{prop}[thm]{\protect\propositionname}
  \theoremstyle{plain}
  \newtheorem{lem}[thm]{\protect\lemmaname}
  \theoremstyle{remark}
  \newtheorem*{rem*}{\protect\remarkname}
  \theoremstyle{remark}
  \newtheorem*{acknowledgement*}{\protect\acknowledgementname}
\newcommand{\optmin}{\mathrm{minimize}}
\newcommand{\optmax}{\mathrm{maximize}}
\newcommand{\optst}{\mathrm{subject\; to}}
  \providecommand{\acknowledgementname}{Acknowledgement}
  \providecommand{\examplename}{Example}
  \providecommand{\lemmaname}{Lemma}
  \providecommand{\propositionname}{Proposition}
  \providecommand{\remarkname}{Remark}
\providecommand{\theoremname}{Theorem}
\begin{document}
\title{Convex Optimal Uncertainty Quantification}
\author[1]{Shuo Han\thanks{E-mail: hanshuo@seas.upenn.edu. This work was performed while the author was with the California Institute of Technology.}} 
\author[2]{Molei Tao} 
\author[1]{Ufuk Topcu} 
\author[3]{Houman Owhadi} 
\author[3]{Richard M. Murray} 
\affil[1]{Department of Electrical and Systems Engineering, University of Pennsylvania, Philadelphia, PA 19104} 
\affil[2]{School of Mathematics, Georgia Institute of Technology, Atlanta GA 30332}
\affil[3]{Division of Engineering and Applied Science, California Institute of Technology, \mbox{Pasadena, CA 91125}}
\maketitle
\begin{abstract}
Optimal uncertainty quantification (OUQ) is a framework for numerical
extreme-case analysis of stochastic systems with imperfect knowledge
of the underlying probability distribution. This paper presents sufficient
conditions under which an OUQ problem can be reformulated as a finite-dimensional
convex optimization problem, for which efficient numerical solutions
can be obtained. The sufficient conditions include that the objective
function is piecewise concave and the constraints are piecewise convex.
In particular, we show that piecewise concave objective functions
may appear in applications where the objective is defined by the optimal
value of a parameterized linear program.

\end{abstract}

\paragraph{{\small{}Key words.}}

{\small{}convex optimization, uncertainty quantification, duality
theory}{\small \par}

\paragraph{{\small{}AMS subject classifications.}}

{\small{}90C25, 90C46, 90C15, 60-08}{\small \par}

\section{Introduction\label{sec:Introduction}}

In many applications, given a cost function~$f\colon\mathbb{R}^{d}\to\mathbb{R}$
that depends on a random variable~$\theta$ in $\mathbb{R}^{d}$,
we would like to compute~$\mathbb{E}_{\theta\sim\mathcal{D}}[f(\theta)]$,
where~$\mathcal{D}$ is the probability distribution of~$\theta$.
If~$\mathcal{D}$ is known exactly, this amounts to a numerical integration
problem. However, sometimes we only have access to partial information
(e.g., moments) about~$\mathcal{D}$ that can be expressed in the
following form:
\begin{equation}
\mathbb{E}_{\theta\sim\mathcal{D}}[g(\theta)]\preceq0,\qquad\mathbb{E}_{\theta\sim\mathcal{D}}[h(\theta)]=0,\label{eq:info_constraint}
\end{equation}
where $g$ and $h$ are (real) vector-valued functions. Since the
constraints appearing in~(\ref{eq:info_constraint}) are used to
specify the available information about~$\mathcal{D}$, we will refer
to these constraints as \emph{information constraints} throughout
the paper. Using only information constraints, it is generally impossible
to compute the exact value of~$\mathbb{E}_{\theta\sim\mathcal{D}}[f(\theta)]$.
Instead, we can only hope to obtain a lower or upper bound of~$\mathbb{E}_{\theta\sim\mathcal{D}}[f(\theta)]$. 

Generally speaking, such bounds are unavailable in closed form except
for several special cases, where the bound can be obtained from probability
inequalities. To this end, this paper focuses on computing such bounds
numerically by solving infinite-dimensional optimization problems
over the set of probability distributions that satisfy the information
constraints. We follow Owhadi et al.~\cite{owhadi2010optimal} and
refer to this problem as \emph{optimal uncertainty quantification
}(OUQ) for convenience (note that the actual OUQ framework presented
in~\cite{owhadi2010optimal} is more general and is capable of dealing
with unknown functions~$f$ in addition to unknown probability distributions).
Formally, an example of OUQ problems is an optimization problem in
the following form:
\begin{alignat}{2}
 & \underset{\mathcal{D}}{\optmax} & \quad & \mathbb{E}_{\theta\sim\mathcal{D}}\left[f(\theta)\right]\label{eq:pom_ineq}\\
 & \optst & \quad & \mathbb{E}_{\theta\sim\mathcal{D}}[g(\theta)]\preceq0\label{eq:pom_ineqconstr}\\
 &  &  & \mathbb{E}_{\theta\sim\mathcal{D}}[h(\theta)]=0\label{eq:pom_eqconstr}\\
 &  &  & \theta\in\Theta\quad\text{almost surely},\label{eq:pom_support}
\end{alignat}
where~$\theta$ is a random variable in~$\mathbb{R}^{d}$, and $\mathcal{D}$
is the probability distribution of~$\theta$. The function~$f$
is real-valued. The functions $g$ and $h$ are (real) vector-valued.
The inequality~(\ref{eq:pom_ineqconstr}) is entry-wise. Formulas
(\ref{eq:pom_ineqconstr}) and (\ref{eq:pom_eqconstr}) correspond
to information constraints on~$\mathcal{D}$. For example, when~$g$
and~$h$ consist of powers of $\theta$, it implies that information
on the moments of~$\theta$ is available. The set~$\Theta\subseteq\mathbb{R}^{d}$
is the support of the distribution. For brevity, the phrase ``almost
surely'' is dropped later in this paper. Note that the condition
that $\mathcal{D}$ is a probability distribution imposes the following
implicit constraints: 
\begin{equation}
\mathbb{E}_{\theta\sim\mathcal{D}}[1]=1,\qquad\mathcal{D}\geq0.\label{eq:pom_dist_constr}
\end{equation}

\paragraph{Main result}

Despite being infinite-dimensional, a large class of OUQ problems
can be reduced to equivalent finite-dimensional optimization problems
that have the same optimal value~\cite{owhadi2010optimal}. This
reduction operates in several steps, in which the first one is a generalization
of linear programming in spaces of measures~\cite{rogosinski1958moments,shapiro2001duality}.
Although this reduction permits a numerical solution to OUQ problems,
the resulting reduced problems may be highly constrained and non-convex.

The paper focuses on the first reduction step and presents sufficient
conditions (Theorem~\ref{thm:couq_main}) under which an OUQ problem
can be reduced to a finite-dimensional \emph{convex} optimization
problem. Specifically, we require that the functions $f$, $g$, and
$h$ satisfy the following conditions:
\begin{enumerate}
\item The function~$f\colon\mathbb{R}^{d}\to\mathbb{R}$ is \emph{piecewise
concave}, i.e., it can be written as 
\begin{equation}
f(\theta)=\max_{k=1,2,\dots,K}f^{(k)}(\theta),\label{eq:pw_concave}
\end{equation}
where each function $f^{(k)}$ is concave. 
\item Each entry of the function~$g\colon\mathbb{R}^{d}\to\mathbb{R}^{p}$
is \emph{piecewise convex}, i.e., each $g_{i}$~$(i=1,2,\dots,p)$
can be written as
\begin{equation}
g_{i}(\theta)=\min_{l_{i}=1,2,\dots,L_{i}}g_{i}^{(l_{i})}(\theta),\label{eq:g_pw_convex}
\end{equation}
where each function $g_{i}^{(l_{i})}$ is convex. 
\item The function~$h\colon\mathbb{R}^{d}\to\mathbb{R}^{q}$ is affine.
Namely, it can be represented as 
\begin{equation}
h(\theta)=A^{T}\theta+b,\label{eq:h_affine}
\end{equation}
for appropriate choices of~$A\in\mathbb{R}^{d\times q}$ and~$b\in\mathbb{R}^{q}$.
\end{enumerate}

\begin{figure}
\hspace*{\fill}
\subfigure[]{\centering\includegraphics[width=0.336\textwidth]{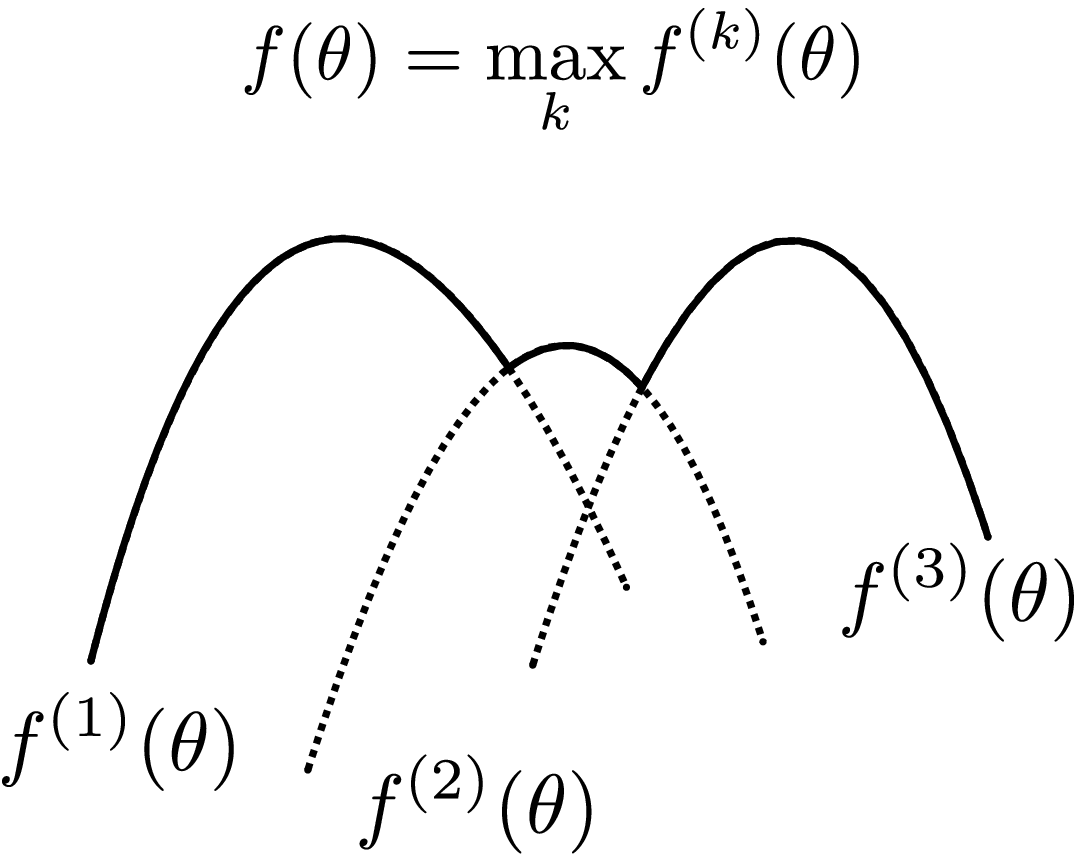}}
\hspace*{\fill}
\subfigure[]{\centering\includegraphics[bb=0bp -27bp 350bp 153bp,width=0.376\textwidth]{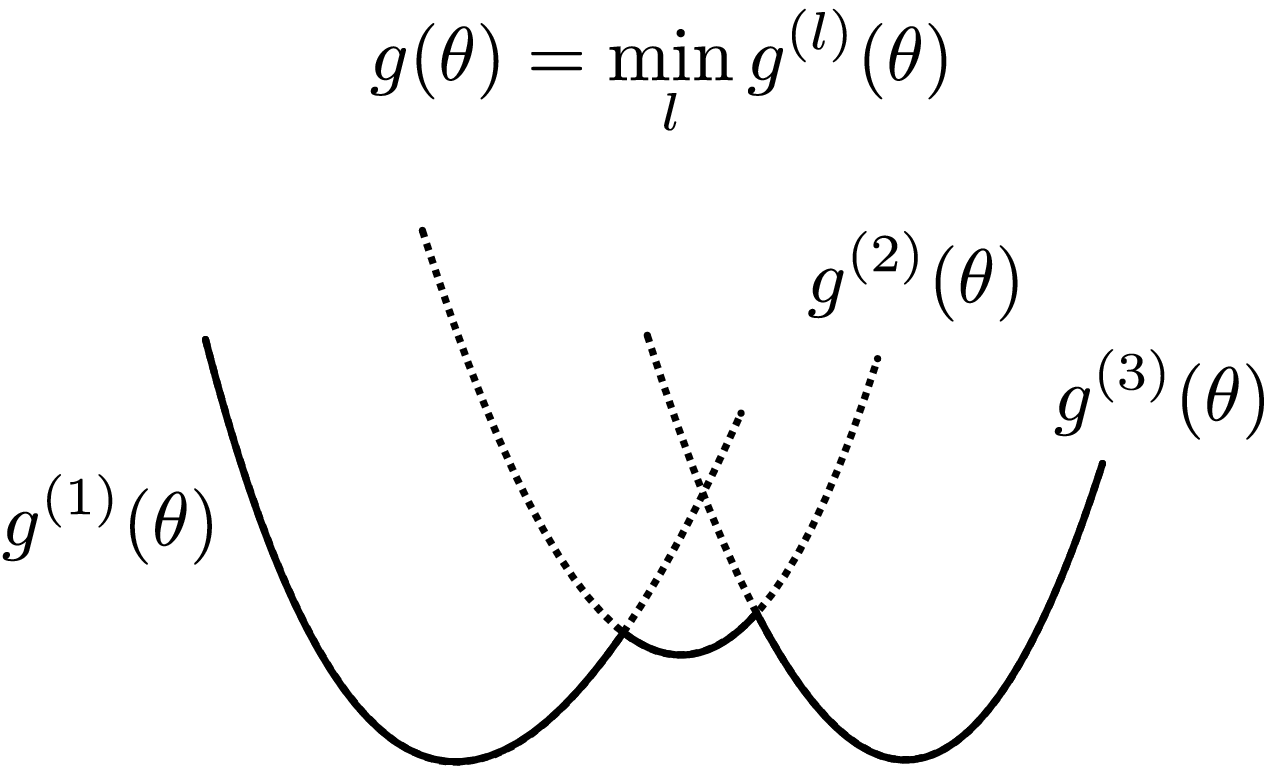}}
\hspace*{\fill}
\caption{(a) Piecewise concave and (b) piecewise convex functions in dimension one.}
\label{fig:pw_concave_convex}
\end{figure}

Figure~\ref{fig:pw_concave_convex} illustrates how piecewise concave
and piecewise convex functions look like in dimension one. In general,
these functions are neither concave nor convex. Later in section~\ref{sec:Applications},
we will give a number of useful examples of piecewise concave/convex
functions in the context of OUQ applications. 

The main result of the paper is that the OUQ problem~(\ref{eq:pom_ineq})
can be solved by considering an equivalent finite-dimensional convex
optimization problem, given that conditions~(\ref{eq:pw_concave})--(\ref{eq:h_affine})
are satisfied. For notational simplicity, we will only present the
case where $g$ is defined from~$\mathbb{R}^{d}$ to $\mathbb{R}$
(i.e., $p=1$): 
\begin{equation}
g(\theta)=\min_{l=1,2,\dots,L}g^{(l)}(\theta),\qquad g^{(l)}\mbox{ is convex.}\label{eq:g_p1}
\end{equation}
The results can be easily generalized to the case of~$p>1$. 
\begin{thm}
\label{thm:couq_main}The (convex) optimization problem 
\begin{alignat}{2}
 & \underset{\{p_{kl},\gamma_{kl}\}_{k,l}}{\optmax} & \quad & \sum_{k=1}^{K}\sum_{l=1}^{L}p_{kl}f^{(k)}(\gamma_{kl}/p_{kl})\label{eq:prob_couq_main}\\
 & \underset{\phantom{\{p_{kj},\gamma_{kj}\}_{k,j}}}{\optst} & \quad & \sum_{k=1}^{K}\sum_{l=1}^{L}p_{kl}=1\label{eq:prob_mass_constr}\\
 &  &  & p_{kl}\geq0,\quad k=1,2,\dots,K,\ l=1,2,\dots,L\label{eq:prob_mass_constr2}\\
 &  &  & \sum_{k=1}^{K}\sum_{l=1}^{L}p_{kl}h(\gamma_{kl}/p_{kl})=A^{T}\left(\sum_{k=1}^{K}\sum_{l=1}^{L}\gamma_{kl}\right)+b=0\nonumber \\
 &  &  & \sum_{k=1}^{K}\sum_{l=1}^{L}p_{kl}g^{(l)}(\gamma_{kl}/p_{kl})\leq0\label{eq:couq_pwcvx_constr}
\end{alignat}
achieves the same optimal value as problem~(\ref{eq:pom_ineq}) if
the functions~$f$, $g$, and $h$ satisfy~(\ref{eq:pw_concave}),
(\ref{eq:g_p1}), and~(\ref{eq:h_affine}), respectively.
\end{thm}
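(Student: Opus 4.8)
The plan is to establish the equality of optimal values by proving the two inequalities separately, reading each pair $(p_{kl},\gamma_{kl})$ through the \emph{perspective} substitution $\theta_{kl}=\gamma_{kl}/p_{kl}$ as a single atom of mass $p_{kl}$ located at $\gamma_{kl}/p_{kl}$.

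For the inequality that the finite-dimensional optimum is no larger than the OUQ optimum, I would start from any feasible $\{(p_{kl},\gamma_{kl})\}$ and build the discrete distribution $\mathcal{D}=\sum_{k,l}p_{kl}\,\delta_{\gamma_{kl}/p_{kl}}$, omitting the cells with $p_{kl}=0$. Constraint~(\ref{eq:prob_mass_constr}) makes $\mathcal{D}$ a probability measure, and the affine rewriting of the equality constraint gives $\mathbb{E}_{\mathcal{D}}[h]=0$ at once. The pointwise bound $g(\theta)=\min_{l}g^{(l)}(\theta)\le g^{(l)}(\theta)$ turns~(\ref{eq:couq_pwcvx_constr}) into $\mathbb{E}_{\mathcal{D}}[g]\le\sum_{k,l}p_{kl}g^{(l)}(\gamma_{kl}/p_{kl})\le 0$, so $\mathcal{D}$ is OUQ-feasible; dually $f(\theta)=\max_{k}f^{(k)}(\theta)\ge f^{(k)}(\theta)$ gives $\mathbb{E}_{\mathcal{D}}[f]\ge\sum_{k,l}p_{kl}f^{(k)}(\gamma_{kl}/p_{kl})$, which is exactly the objective~(\ref{eq:prob_couq_main}). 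Every finite-dimensional feasible point is thus dominated by an OUQ-feasible measure, and passing to the supremum yields this direction. Note that it is purely pointwise and uses no convexity.

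The reverse inequality is where the structural hypotheses enter. Here I would invoke the first reduction step---the generalization of linear programming over spaces of measures \cite{owhadi2010optimal,rogosinski1958moments,shapiro2001duality}---to replace the OUQ optimum by a finitely supported distribution $\{(\theta_i,w_i)\}$; this is legitimate because $\mathcal{D}\mapsto\mathbb{E}_{\mathcal{D}}[f]$ is linear, so the maximum is attained at an extreme point of the feasible set, which is discrete. To each atom I attach a label $k_i\in\arg\max_{k}f^{(k)}(\theta_i)$ and $l_i\in\arg\min_{l}g^{(l)}(\theta_i)$, and then collapse atoms sharing a label pair onto the grid by $p_{kl}=\sum_{i:(k_i,l_i)=(k,l)}w_i$ and $\gamma_{kl}=\sum_{i:(k_i,l_i)=(k,l)}w_i\theta_i$, so that $\gamma_{kl}/p_{kl}$ is the barycenter of cell $(k,l)$. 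Affinity of $h$ makes this aggregation exact on the equality constraint; Jensen's inequality for the convex $g^{(l)}$ gives $\sum_{k,l}p_{kl}g^{(l)}(\gamma_{kl}/p_{kl})\le\sum_{i}w_i g^{(l_i)}(\theta_i)=\mathbb{E}_{\mathcal{D}}[g]\le 0$, preserving feasibility; and Jensen for the concave $f^{(k)}$ gives $\sum_{k,l}p_{kl}f^{(k)}(\gamma_{kl}/p_{kl})\ge\sum_{i}w_i f^{(k_i)}(\theta_i)=\mathbb{E}_{\mathcal{D}}[f]$, so the constructed grid point is feasible with value at least the OUQ optimum.

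The crux is that the two Jensen steps must point in compatible directions: barycentric aggregation has to raise the concave objective while lowering the convex constraint, which is precisely what the perspective form $p\,\phi(\gamma/p)$ provides for concave and convex $\phi$, respectively. I expect the main difficulty to be bookkeeping rather than depth---checking that collapsing arbitrarily many atoms into the fixed $K\times L$ grid never breaks a constraint, handling empty cells through the convention $p\,\phi(\gamma/p)=0$ at $p=\gamma=0$, and confirming that the support condition $\theta\in\Theta$ is vacuous (taking $\Theta=\mathbb{R}^d$) or otherwise absorbable into the convex pieces. Convexity of the reduced program itself is then an independent routine check: each objective term $p_{kl}f^{(k)}(\gamma_{kl}/p_{kl})$ is the perspective of a concave function and hence jointly concave, each $p_{kl}g^{(l)}(\gamma_{kl}/p_{kl})$ is jointly convex, and all remaining constraints are affine.
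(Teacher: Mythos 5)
Your proposal is correct and takes essentially the same route as the paper's primal-form proof: reduce to a finitely supported optimal distribution, sort atoms into the $K\times L$ cells according to which $f^{(k)}$ achieves the maximum and which $g^{(l)}$ the minimum, merge each cell at its barycenter using Jensen's inequality (which raises the concave objective, lowers the convex constraint, and is exact for affine $h$), and finish with the perspective substitution $\gamma_{kl}=p_{kl}\theta_{kl}$ and the joint concavity/convexity of the perspective terms. The only differences are organizational---the paper merges atoms pairwise by contradiction in a standalone lemma (Lemma~\ref{lem:max-min-achieving}) and is terser about the easy direction, which you spell out as an explicit pointwise feasibility argument---so no further changes are needed.
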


Theorem~\ref{thm:couq_main} provides sufficient conditions under
which an OUQ problem can be solved in a tractable manner, as guaranteed
by the convexity of the optimization problem~(\ref{eq:prob_couq_main}).
The proof of Theorem~\ref{thm:couq_main} will be given later in
section~\ref{sec:Convex-optimal-uncertainty}. The paper is organized
as follows. Section~\ref{sec:Optimal-uncertainty-quantificati} gives
a historical perspective of OUQ by reviewing related work and previous
results on equivalent finite-dimensional reduction of OUQ problems.
Section~\ref{sec:Convex-optimal-uncertainty} presents the main result
on convex reformulation of OUQ, which can be derived either from the
primal or the dual form of the original OUQ problem. Section~\ref{sec:Applications}
gives several applications of the convex OUQ formulation. Section~\ref{sec:Examples}
provides numerical illustrations of the main theoretical result.

\paragraph{Remarks on the OUQ formulation}

Problem~(\ref{eq:pom_ineq}) can also be written in a form without
the equality constraint~(\ref{eq:pom_eqconstr}) and the support
constraint~(\ref{eq:pom_support}). The equality constraint~(\ref{eq:pom_eqconstr})
can be eliminated by introducing the following inequality constraints:
\[
\mathbb{E}_{\theta\sim\mathcal{D}}[h(\theta)]\preceq0,\qquad\mathbb{E}_{\theta\sim\mathcal{D}}[-h(\theta)]\preceq0.
\]
The support constraint can be shown as equivalent to
\begin{equation}
\mathbb{E}_{\theta\sim\mathcal{D}}[I(\theta\notin\Theta)]\leq0,\label{eq:ind_function_1}
\end{equation}
where~$I$ is the 0-1 indicator function:
\[
I(E)=\begin{cases}
1, & \quad E=\mathrm{true}\\
0, & \quad E=\mathrm{false}.
\end{cases}
\]
In order to prove this, we note that
\[
\mathbb{E}_{\theta\sim\mathcal{D}}[I(\theta\notin\Theta)]\geq0
\]
automatically holds due to the fact that~$I$ is non-negative. Therefore,
the condition~(\ref{eq:ind_function_1}) is equivalent to
\[
\mathbb{E}_{\theta\sim\mathcal{D}}[I(\theta\notin\Theta)]=0,
\]
which is equivalent to~(\ref{eq:pom_support}). However, we will
still use the original form in problem~(\ref{eq:pom_ineq}) to distinguish
these constraints from pure inequalities. In some cases, problem~(\ref{eq:pom_ineq})
is also called the \emph{(generalized) moment problem} or \emph{problem
of moments}, since the information constraints~(\ref{eq:pom_ineqconstr})
and~(\ref{eq:pom_eqconstr}) often consist of moments of the distribution
or can be considered as generalized moments of the distribution.

\section{Optimal uncertainty quantification\label{sec:Optimal-uncertainty-quantificati}}

\subsection{Historical perspective and related work}

Among various convex formulations of OUQ, one important special case
is when
\begin{equation}
f=I(\theta\in C),\label{eq:Cset}
\end{equation}
where~$C\subseteq\mathbb{R}^{d}$ and~$I$ is the 0-1 indicator
function. Solution to the OUQ problem will yield a sharp bound of
the probability~$\mathbb{P}(\theta\in C)$ under the given information
constraints. This bound is also called Chebyshev-type bound or generalized
Chebyshev bound. The earliest theoretical analysis of such bounds
can be traced back to the pioneering work by Chebyshev and his student
Markov (see Krein~\cite{Krein} for an account of the history of
this subject along with his substantial contributions). We also refer
to early work by Isii~\cite{isii1959method,isii1962sharpness,isii1964inequalities}
and Marshall and Olkin~\cite{marshall1960multivariate}. 

As related in Owhadi and Scovel \cite[section~2]{OwhadiScovel:2013},
OUQ starts from the same mindset and applies it to more complex problems
that extend the base space to functions and measures. Instead of developing
sophisticated mathematical solutions, OUQ develops optimization problems
and reductions, so that their solution may be implemented on a computer,
as in Bertsimas and Popescu's \cite{bertsimas2005optimal} convex
optimization approach to Chebyshev inequalities, and the decision
analysis framework of Smith \cite{smith1995generalized}. Interestingly,
many inequalities in probability theory can be viewed as OUQ problems.
One such example is Markov's inequality, which has its origin in the
following problem \cite[Page~4]{Krein} (according to Krein \cite{Krein},
although Chebyshev did solve this problem, it was his student Markov
who supplied the proof in his thesis):
\begin{quote}
``Given: length, weight, position of the centroid and moment of inertia
of a material rod with a density varying from point to point. It is
required to find the most accurate limits for the weight of a certain
segment of this rod.'' 
\end{quote}
Although the statement of the problem assumes knowledge about both
the first and second moments (centroid and moment of inertia), Markov
has also obtained an inequality (known as Markov\textquoteright{}s
inequality) that is optimal with respect to the information contained
in the first moment.
\begin{example}
[Markov's inequality]Suppose~$\theta$ is a nonnegative univariate
random variable whose probability distribution is unknown, but its
mean~$\mathbb{E}[\theta]=\mu$ is given. For any $a>0$, Markov's
inequality~\cite[Page 311]{grimmett2001probability} gives a bound
for~$\mathbb{P}(\theta\geq a)$ as follows, regardless of the probability
distribution of $\theta$:
\[
\mathbb{P}(\theta\geq a)\leq\mathbb{E}[\theta]/a.
\]
Substituting $\mathbb{E}[\theta]=\mu$ into the above inequality,
we obtain
\[
\mathbb{P}(\theta\geq a)\leq\mu/a.
\]
In the OUQ framework, the problem of obtaining a tight bound for~$\mathbb{P}(\theta\geq a)$
becomes
\begin{alignat*}{2}
 & \underset{\mathcal{D}}{\optmax} & \quad & \mathbb{E}_{\theta\sim\mathcal{D}}[I(\theta\geq a)]\\
 & \optst & \quad & \mathbb{E}_{\theta\sim\mathcal{D}}[\theta]=\mu\\
 &  &  & \theta\in[0,\infty).
\end{alignat*}
In fact, it can be shown that the optimal value of the above problem
is $\mu/a$. Namely, Markov's inequality produces a tight bound.
\end{example}
Recent works on convex optimization approach to Chebyshev inequalities
(motivated by efficient numerical methods such as the interior-point
method~\cite{nesterov1994interior}) also include Lasserre~\cite{lasserre2002bounds},
Popescu~\cite{popescu2005semidefinite}, and Vandenberghe et al.~\cite{vandenberghe2007generalized}
for convex formulations for different classes of sets $C$ in (\ref{eq:Cset})
(e.g., ellipsoids, semi-algebraic sets). Compared to these formulations
that only consider the case where $f$ can be expressed in the form
of~(\ref{eq:Cset}), our formulation extends the objective function~$f$
to the more general class of piecewise concave functions as given
in~(\ref{eq:pw_concave}).

Besides indicator set functions~(\ref{eq:Cset}), another class of
functions $f$ that appear in convex formulations are functions that
are both convex and piecewise affine:
\begin{equation}
f(\theta)=\max_{k=1,2,\dots,K}\{a_{k}^{T}\theta+b_{k}\},\label{eq:piecewise_affine_convex}
\end{equation}
where~$K$ and $\{a_{k},b_{k}\}_{k=1}^{K}$ are given constants.
This form arises in applications such as stock investment~\cite{delage2010distributionally}
and logistics~\cite{bertsimas2010models}. Since all affine functions
are concave, we can define $f^{(k)}=a_{k}^{T}\theta+b_{k}$ in~(\ref{eq:pw_concave}),
so that objective functions in the form of~(\ref{eq:piecewise_affine_convex})
become a special case of our formulation. 

Besides the conditions on~$f$, there are convex formulations that
incorporate information constraints in the form of moment constraints.
Oftentimes these constraints are limited to the mean and covariance,
such as in Delage and Ye~\cite{delage2010distributionally}. For
constraints on moments of arbitrary order, it is known that the feasibility
of moment constraints can be represented as a linear matrix inequality
on a Hankel matrix consisting of the given moments~\cite[Page~170]{boyd2004convex}.
The linear matrix inequality formulation allows OUQ problems to be
cast as semidefinite programs, if the objective function is the 0-1
indicator function~\cite{popescu2005semidefinite} or a linear combination
of the moments~\cite[Page~170]{boyd2004convex}. The same technique
can also be extended to the case of complex moments and functions~\cite{byrnes2003convex}.
Compared to constraints on moments of arbitrary order, our formulation
is restricted to inequality constraints on the moments of even order
but can handle the broader class of piecewise concave objective functions.

To summarize, this paper shows that a convex reformulation of OUQ
problems can be obtained for the broader class of piecewise concave
objective functions $f$, which contains both the 0-1 indicator function~(\ref{eq:Cset})
and functions of the form~(\ref{eq:piecewise_affine_convex}) that
are considered in previous formulations by others. The information
constraints in the formulation are required to be inequalities containing
piecewise convex functions, which extend the moment constraints in
previous formulations at the expense of relaxing equality to inequality
constraints.

\subsection{Finite reduction of OUQ problems}

The perhaps surprising fact is that an OUQ problem can always be reduced
to an equivalent finite-dimensional optimization problem that yields
the same optimal value. The following proposition is originally due
to Rogosinski~\cite{rogosinski1958moments}. See also Shapiro~\cite{shapiro2001duality}
for an extension to conic linear programs and Owhadi et al.~\cite{owhadi2010optimal}
for a more general result that allows other kinds of constraints.
\begin{prop}
[Finite reduction property, cf.~\cite{owhadi2010optimal, rogosinski1958moments, shapiro2001duality}]\label{prop:finite_red_ineq}
Let~$m$ be the total number of scalar (in)equalities described by
$g$ and $h$ in (\ref{eq:pom_ineqconstr}) and (\ref{eq:pom_eqconstr}).
The finite-dimensional problem 
\begin{alignat}{2}
 & \underset{\{p_{i},\theta_{i}\}_{i=1}^{m+1}}{\optmax}\quad &  & \sum_{i=1}^{m+1}p_{i}f(\theta_{i})\label{eq:finite_red_ineq}\\
 & \underset{\phantom{\{p_{i},\theta_{i}\}_{i=1}^{m+1}}}{\optst}\quad &  & \sum_{i=1}^{m+1}p_{i}=1\nonumber \\
 &  &  & p_{i}\geq0,\quad i=1,2,\dots,m+1\nonumber \\
 &  &  & \sum_{i=1}^{m+1}p_{i}g(\theta_{i})\preceq0,\qquad\sum_{i=1}^{m+1}p_{i}h(\theta_{i})=0\nonumber \\
 &  &  & \theta_{i}\in\Theta,\quad i=1,2,\dots,m+1\nonumber 
\end{alignat}
achieves the same optimal value as problem~(\ref{eq:pom_ineq}). 
\end{prop}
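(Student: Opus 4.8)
The plan is to establish equality of the two optimal values by proving the inequality in each direction, exploiting that both the objective $\mathcal{D}\mapsto\mathbb{E}_{\theta\sim\mathcal{D}}[f(\theta)]=\int f\,d\mu$ (I write $\mu$ for the measure $\mathcal{D}$) and all the constraints (\ref{eq:pom_ineqconstr})--(\ref{eq:pom_eqconstr}) are \emph{linear} in $\mu$. One direction is immediate: every feasible point $\{p_{i},\theta_{i}\}_{i=1}^{m+1}$ of the finite problem (\ref{eq:finite_red_ineq}) defines a discrete probability measure $\sum_{i}p_{i}\delta_{\theta_{i}}$ that is feasible for (\ref{eq:pom_ineq}) with the same objective value, so (\ref{eq:finite_red_ineq}) optimizes over a subset of the feasible set of (\ref{eq:pom_ineq}) and its optimal value is at most that of (\ref{eq:pom_ineq}). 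The entire content of the proposition is therefore the reverse inequality: that restricting to measures supported on at most $m+1$ atoms loses nothing.

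For the reverse inequality I would prove the stronger, per-measure statement: for \emph{any} measure $\mu$ feasible for (\ref{eq:pom_ineq}) there is a discrete measure $\nu$ supported on at most $m+1$ points of $\Theta$ that is feasible and satisfies $\int f\,d\nu\ge\int f\,d\mu$; taking the supremum over $\mu$ then gives $\mathrm{val}(\ref{eq:pom_ineq})\le\mathrm{val}(\ref{eq:finite_red_ineq})$. The natural device is the \emph{moment map} $\theta\mapsto\bigl(g(\theta),h(\theta),f(\theta)\bigr)\in\mathbb{R}^{m+1}$, with $m=p+q$ coordinates from $g,h$ and one from $f$. Each feasible $\mu$ pushes forward to a barycenter $b(\mu)=\bigl(\int g\,d\mu,\int h\,d\mu,\int f\,d\mu\bigr)$ lying in the convex hull of the image set $V=\{(g(\theta),h(\theta),f(\theta)):\theta\in\Theta\}$, and conversely every point of $\mathrm{conv}(V)$ is realized by a finitely supported probability measure. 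Thus $\mathrm{val}(\ref{eq:pom_ineq})$ equals the maximum of the last ($f$) coordinate over the slice $\{(x,y,t)\in\mathrm{conv}(V):x\preceq0,\ y=0\}$, a finite-dimensional convex program.

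The crux --- and the place where the sharp count $m+1$, rather than the generic Carathéodory bound $m+2$, must be earned --- is to argue that an optimal barycenter $w^{\star}=(x^{\star},0,t^{\star})$ is a convex combination of only $m+1$ points of $V$. I would obtain this from duality: since $w^{\star}$ maximizes the linear objective given by the $f$-coordinate over the convex set $\mathrm{conv}(V)$ subject to the linear (in)equalities $x\preceq0,\ y=0$, there exist multipliers $\lambda\succeq0$ and $\eta$ so that $w^{\star}$ maximizes the \emph{single} affine functional $\ell(x,y,t)=t-\lambda^{T}x-\eta^{T}y$ over all of $\mathrm{conv}(V)$. Hence $w^{\star}$ lies on a supporting hyperplane $H$ of $\mathrm{conv}(V)$, and since $\ell$ is affine any representation of $w^{\star}$ as a convex combination of points of $V$ must be supported on $V\cap H$; as $V\cap H$ sits in the $m$-dimensional affine space $H$, Carathéodory's theorem yields $w^{\star}$ as a combination of at most $m+1$ of them, and these $m+1$ atoms and weights form the desired $\nu$. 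Equivalently, one may reach the same count constructively: first invoke the classical Richter--Rogosinski representing-measure theorem to replace $\mu$ by a finitely supported feasible measure with the same $g,h,f$ integrals, then repeatedly \emph{peel} atoms --- whenever more than $m+1$ atoms remain, the $m+1$ linear functionals $1,g,h$ leave a nonzero weight-perturbation $\epsilon$ in their common kernel, and moving the weights along the sign of $\epsilon$ that does not decrease $\int f$ until some weight vanishes removes one atom while preserving feasibility.

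The main obstacle is precisely this improvement from $m+2$ to $m+1$ atoms, which forces one to use that the optimizer sits on the boundary of $\mathrm{conv}(V)$ --- encoded by the dual multipliers $\lambda,\eta$ --- rather than in its interior. A secondary, technical obstacle is the measure-theoretic bookkeeping: that each barycenter genuinely lies in $\mathrm{conv}(V)$ and that the relevant supremum is attained (or approached arbitrarily closely), which requires either a compactness/closedness hypothesis on $\Theta$ and $V$ or a limiting argument phrased directly in terms of optimal values. These are exactly the points handled in the classical references \cite{rogosinski1958moments,shapiro2001duality,owhadi2010optimal}, and I emphasize that neither any structure of $f$ nor convexity of the problem is needed here --- only linearity in $\mu$.
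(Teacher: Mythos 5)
The paper never proves Proposition~\ref{prop:finite_red_ineq}: it is imported as a known result, with the proof delegated entirely to the cited references \cite{rogosinski1958moments,shapiro2001duality,owhadi2010optimal}, so there is no in-paper argument to compare yours against --- what you have written is, in effect, a reconstruction of the classical proof behind those citations, and it is essentially correct. The easy direction (discrete feasible points embed into the feasible set of (\ref{eq:pom_ineq})) is right, and your moment map $\theta\mapsto(g(\theta),h(\theta),f(\theta))$ with barycenters in $\mathrm{conv}(V)$ is exactly the Richter--Rogosinski device. On the crux --- earning $m+1$ atoms instead of the generic Carath\'eodory count $m+2$ --- your two routes are not equally solid, and it is worth being clear which one carries the proof. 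The constructive ``peeling'' route is complete and needs neither attainment nor duality: with $N\geq m+2$ atoms, the $m+1$ functionals $w\mapsto\sum_i w_i$, $w\mapsto\sum_i w_i g(\theta_i)$, $w\mapsto\sum_i w_i h(\theta_i)$ have a nonzero common kernel vector $\epsilon$; since $\sum_i\epsilon_i=0$ forces entries of both signs, moving along whichever of $\pm\epsilon$ makes $\sum_i w_i f(\theta_i)$ nondecreasing drives some weight to zero while preserving mass, the $g$- and $h$-integrals exactly, and membership in $\Theta$ --- this is just the observation that extreme points of the weight polytope have at most $m+1$ nonzero coordinates, and it yields the per-measure statement for every feasible $\mu$ with no $\varepsilon$-arguments. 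By contrast, your first (duality/supporting-hyperplane) route has the loose ends you yourself flag plus one you don't: it requires the supremum over the fiber to be attained \emph{and} a constraint qualification (e.g., a relative-interior condition) for the multipliers $\lambda\succeq0,\eta$ to exist, since a linear objective with affine constraints over a general convex set $\mathrm{conv}(V)$ does not automatically admit Lagrange multipliers. One small point to nail down in either route: $b(\mu)\in\mathrm{conv}(V)$ (not merely the closed hull) is true in finite dimensions but needs the induction-on-dimension argument via supporting hyperplanes, together with the implicit integrability of $f,g,h$ under feasible $\mu$; these are precisely the bookkeeping items handled in \cite{rogosinski1958moments,shapiro2001duality}. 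In short: your proposal is correct (with the peeling argument doing the real work), and it supplies detail the paper deliberately omits.
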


Proposition~\ref{prop:finite_red_ineq} implies that the optimal
value of any OUQ problem can always be achieved by a discrete probability
distribution with finite support, whose number of Dirac masses depends
on the information constraints. In the following, we give a simple
example (reproduced from Owhadi et al.~\cite{owhadi2010optimal})
to illustrate this property.
\begin{example}
[``Seesaw'', see also~\cite{owhadi2010optimal}]\label{ex:seesaw}The
following example OUQ problem illustrates the fact that the optimal
value of an OUQ problem can be achieved by a discrete distribution
with finite support; it also shows the connection between the number
of information constraints and the number of Dirac masses required
in the (optimal) discrete distribution. Consider the following OUQ
problem for a scalar random variable~$\theta$:
\begin{alignat*}{2}
 & \underset{\mathcal{D}}{\optmax} & \quad & \mathbb{P}(\theta\geq\gamma)\\
 & \optst & \quad & \mathbb{E}_{\theta\sim\mathcal{D}}[\theta]=0,\qquad a\leq\theta\leq b,
\end{alignat*}
where~$a,b$, and~$\gamma$ are constants satisfying $a<0\leq\gamma<b.$
In order to maximize $\mathbb{P}(\theta\geq\gamma)$, we would want
to assign as much probability as possible within~$[\gamma,b]$. On
the other hand, the condition $\mathbb{E}_{\theta\sim\mathcal{D}}[\theta]=0$
requires that the probability within~$[a,0]$ and that within~$[0,b]$
must be identical. This condition is analogous to a seesaw pivoted
at $0$ with two end points at $a$ and $b$ (Figure~\ref{fig:seesaw}).
It is not difficult to see that the best assignment is to put all
the probability on the right side at $\gamma$ (for least leverage)
and all the probability on the left side at $a$ (for most leverage).
This assignment implies that the optimal distribution can be achieved
with a discrete distribution consisting of~$2$ Dirac masses at $a$
and $\gamma$. Indeed, since there is only one scalar constraint,
the total number of Dirac masses predicted by Proposition~\ref{prop:finite_red_ineq}
is~$1+1=2$.
\end{example}
\begin{figure}
\begin{centering}
\includegraphics[width=0.6\textwidth]{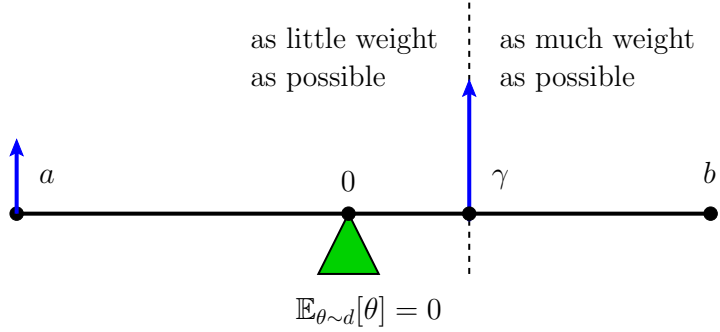}
\par\end{centering}

\caption{``Seesaw'' analogy in Example~\ref{ex:seesaw} (reproduced from~\cite{owhadi2010optimal}).}
\label{fig:seesaw}
\end{figure}

\section{Convex optimal uncertainty quantification\label{sec:Convex-optimal-uncertainty}}

We will now prove Theorem~\ref{thm:couq_main}, the main result of
this paper. Namely, under conditions~(\ref{eq:pw_concave})--(\ref{eq:h_affine}),
the OUQ problem~(\ref{eq:pom_ineq}) can be reformulated as a finite-dimensional
convex optimization problem. We will prove Theorem~\ref{thm:couq_main}
in two ways by considering the primal form and dual form of the OUQ
problem, respectively.

\subsection{Primal form\label{sec:Convex-reformulation-primal}}

According to the finite reduction property, it suffices to use a finite
number of Dirac masses to represent the optimal distribution. In particular,
due to the special form of the objective function and constraints,
these Dirac masses satisfy a useful property as given in the following
lemma.
\begin{lem}
\label{lem:max-min-achieving}Suppose $f$, $g$, and $h$ satisfy
the conditions (\ref{eq:pw_concave}), (\ref{eq:g_p1}), and (\ref{eq:h_affine}),
respectively. Then the optimal distribution for problem~(\ref{eq:pom_ineq})
can be achieved by a discrete distribution consisting of at most $K\cdot L$
Dirac masses located at $\{\theta_{kl}\}$ $(k=1,2,\dots,K;\, l=1,2,\dots,L)$.
In addition, each $\theta_{kl}$ achieves maximum at $f^{(k)}$ and
minimum at $g^{(l)}$: 
\[
f(\theta_{kl})=f^{(k)}(\theta_{kl}),\qquad g(\theta_{kl})=g^{(l)}(\theta_{kl}).
\]
\end{lem}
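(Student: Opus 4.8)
The plan is to start from a finitely supported optimal distribution, which exists by Proposition~\ref{prop:finite_red_ineq}, and to ``collapse'' its support so that every remaining atom sits on exactly one of the $K\cdot L$ pieces. For each pair $(k,l)$ introduce the region $\Theta_{kl}=\{\theta:f(\theta)=f^{(k)}(\theta)\text{ and }g(\theta)=g^{(l)}(\theta)\}$, i.e.\ the set of points at which the outer maximum in~(\ref{eq:pw_concave}) is attained by $f^{(k)}$ and the inner minimum in~(\ref{eq:g_p1}) by $g^{(l)}$. Since a finite maximum and a finite minimum are always attained, every $\theta\in\mathbb{R}^{d}$ lies in at least one $\Theta_{kl}$, so these $K\cdot L$ regions cover the domain (with possible overlaps on the boundaries, where ties are broken arbitrarily). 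First I would label each atom of the optimal distribution by a region containing it.

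The core step is a merging (Jensen-type) argument. Suppose a group of atoms at $\theta_{1},\dots,\theta_{n}$ carrying masses $p_{1},\dots,p_{n}$ all share the label $(k,l)$, and let $q=\sum_{i}p_{i}$ and $\bar\theta=q^{-1}\sum_{i}p_{i}\theta_{i}$ be their barycenter. I would replace these atoms by a single atom of mass $q$ at $\bar\theta$ and check that this move is admissible. Because $h$ is affine, $\sum_{i}p_{i}h(\theta_{i})=q\,h(\bar\theta)$, so the equality constraint is unchanged. For the objective, concavity of $f^{(k)}$ gives $\sum_{i}p_{i}f^{(k)}(\theta_{i})\le q\,f^{(k)}(\bar\theta)$; combining the equality $f(\theta_{i})=f^{(k)}(\theta_{i})$ (valid since $\theta_{i}\in\Theta_{kl}$) with the pointwise bound $f^{(k)}(\bar\theta)\le f(\bar\theta)$ from~(\ref{eq:pw_concave}) shows the objective does not decrease. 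Symmetrically, convexity of $g^{(l)}$ together with $g(\theta_{i})=g^{(l)}(\theta_{i})$ and the pointwise bound $g(\bar\theta)\le g^{(l)}(\bar\theta)$ from~(\ref{eq:g_p1}) shows the aggregate value of $g$ only decreases, so the inequality constraint remains satisfied. (If a convex support set $\Theta$ is present, then $\bar\theta\in\Theta$ because it is a convex combination of points of $\Theta$.) Starting from an optimal distribution, the objective cannot increase either, so the merged distribution is again feasible and optimal.

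Finally I would iterate. After each merge the new atom $\bar\theta$ is relabeled by whatever region it actually belongs to, and any region that ends up carrying two or more atoms triggers another merge. Each merge strictly decreases the (finite) number of atoms, so the process terminates; at termination no region carries more than one atom, so there are at most $K\cdot L$ atoms, and each atom $\theta_{kl}$ lies in its labeling region $\Theta_{kl}$, which is exactly the asserted property $f(\theta_{kl})=f^{(k)}(\theta_{kl})$ and $g(\theta_{kl})=g^{(l)}(\theta_{kl})$.

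The main obstacle is that the regions $\Theta_{kl}$ are generally \emph{not} convex (the set on which a concave $f^{(k)}$ dominates the remaining pieces is a superlevel set of a difference of concave functions, hence need not be convex), so the barycenter $\bar\theta$ of a group need not stay inside $\Theta_{kl}$ and a single pass of merging need not land every atom on the right pieces. This is precisely why the argument leans on the \emph{global} pointwise bounds $f^{(k)}\le f$ and $g^{(l)}\ge g$, so that the Jensen estimates survive even when $\bar\theta$ leaves the region, and why the relabel-and-remerge loop---justified by the strictly decreasing atom count---is what ultimately delivers both the $K\cdot L$ bound and the max/min-achieving property.
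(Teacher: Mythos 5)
Your proposal is correct and follows essentially the same route as the paper's proof: both rest on the identical Jensen-type merging of atoms sharing a piece pair $(k,l)$, using concavity of $f^{(k)}$, convexity of $g^{(l)}$, affineness of $h$, and the global pointwise bounds $f^{(k)}\le f$ and $g\le g^{(l)}$ so the estimates survive at the barycenter. The only difference is presentational: the paper casts the merge as a single pigeonhole/contradiction step on one pair of atoms and leaves the iteration implicit, whereas you spell out the relabel-and-re-merge loop with the strictly decreasing atom count---an honest patch of a detail the paper glosses over, namely that the merged barycenter may exit the (generally non-convex) region $\Theta_{kl}$ and so a single merging pass need not suffice.
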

\begin{proof}
We will prove the lemma by contradiction. From Proposition~\ref{prop:finite_red_ineq},
we know that the optimal probability distribution can be achieved
by a discrete distribution with finite support. By way of contradiction,
assume that the optimal discrete distribution contains more than $K\cdot L$
Dirac masses. Then, by an argument using the pigeonhole principle,
the optimal discrete distribution must contain two Dirac masses located
at $\phi_{1}$ and $\phi_{2}$ with probabilities $q_{1}$ and $q_{2}$,
respectively, such that both $\phi_{1}$ and $\phi_{2}$ achieve maximum
at $f^{(k)}$ and minimum at $g^{(l)}$ for some $k\in\{1,2,\dots,K\}$
and $l\in\{1,2,\dots,L\}$, i.e., 
\begin{alignat*}{3}
f(\phi_{1}) & =f^{(k)}(\phi_{1}), & \qquad & f(\phi_{2}) & =f^{(k)}(\phi_{2}),\\
g(\phi_{1}) & =g^{(l)}(\phi_{1}), & \qquad & g(\phi_{2}) & =g^{(l)}(\phi_{2}).
\end{alignat*}
Consider a new Dirac mass whose probability~$q$ and location~$\phi$
are given by
\[
q=q_{1}+q_{2},\qquad\phi=\frac{q_{1}\phi_{1}+q_{2}\phi_{2}}{q_{1}+q_{2}}.
\]
It can be verified that replacing the two previous Dirac masses~$(q_{1},\phi_{1})$
and~$(q_{2},\phi_{2})$ with this new Dirac mass~$(q,\phi)$ will
still yield a valid probability distribution (i.e., the probability
masses sum up to 1). Moreover, the new distribution will give an objective~$\mathbb{E}[f(\theta)]$
that is no smaller than the previous one, since
\begin{equation}
qf(\phi)\geq qf^{(k)}(\phi)\geq q_{1}f^{(k)}(\phi_{1})+q_{2}f^{(k)}(\phi_{2})=q_{1}f(\phi_{1})+q_{2}f(\phi_{2}),\label{eq:obj_max_jensen}
\end{equation}
where the second inequality is an application of Jensen's inequality
and last equality uses the fact that $\phi_{1}$ and $\phi_{2}$ achieve
maximum at $f^{(k)}$. 

On the other hand, it can be shown that the new distribution will
remain as a feasible solution. The equality constraint on~$\mathbb{E}[h(\theta)]$
remains feasible, because
\begin{align*}
qh(\phi) & =q(A^{T}\phi+b)=A^{T}(q_{1}+q_{2})\phi+b(q_{1}+q_{2})\\
 & =A^{T}(q_{1}\phi_{1}+q_{2}\phi_{2})+b(q_{1}+q_{2})=q_{1}h(\phi_{1})+q_{2}h(\phi_{2}).
\end{align*}
The feasibility of the inequality constraint on~$\mathbb{E}[g(\theta)]$
can be proved by using a similar argument as in~(\ref{eq:obj_max_jensen})
by observing that $\mathbb{E}[g(\theta)]$ evaluated under the new
distribution will be no larger than that under the original distribution,
because 
\[
qg(\phi)\leq qg^{(l)}(\phi)\leq q_{1}g^{(l)}(\phi_{1})+q_{2}g^{(l)}(\phi_{2})=q_{1}g(\phi_{1})+q_{2}g(\phi_{2}).
\]
Therefore, the two old Dirac masses can be replaced by the new single
one without affecting optimality, from which the uniqueness of~$\theta_{kl}$
follows.
\qquad\end{proof}

The number of Dirac masses given by Lemma~\ref{lem:max-min-achieving}
depends only on~$K$ and~$L$, and is independent from that given
by the finite reduction property. By using Lemma~\ref{lem:max-min-achieving},
we can obtain an equivalent convex optimization problem for the original
problem~(\ref{eq:pom_ineq}), as given in Theorem~\ref{thm:couq_main}
presented in section~\ref{sec:Introduction}. The proof of Theorem~\ref{thm:couq_main}
is given as follows.
\begin{proof}
(of Theorem~\ref{thm:couq_main}) According to Lemma~\ref{lem:max-min-achieving},
we can optimize over a new set of Dirac masses whose probability weights
and locations are~$\{p_{kl},\theta_{kl}\}$ ($k=1,2,\dots,K$, $L=1,2,\dots,L$).
The requirement that the set of Dirac masses forms a valid probability
distribution imposes the constraints~(\ref{eq:prob_mass_constr})
and~(\ref{eq:prob_mass_constr2}). Under the new set of Dirac masses,
the objective function can be rewritten as
\[
\mathbb{E}\left[f(\theta)\right]=\sum_{k,l}p_{kl}f(\theta_{kl})=\sum_{k,l}p_{kl}f^{(k)}(\theta_{kl}),
\]
where the second equality uses the fact that $\theta_{kl}$ achieves
maximum at $f^{(k)}$. Unless otherwise noted, the range of the summation
$\sum_{k,l}$ over the indices $k$ and $l$ is given by $k=1,2,\dots,K$
and $l=1,2,\dots,L$. As will be shown later, this step is critical,
since $f$ is generally not concave, but $\sum_{k,l}p_{kl}f^{(k)}(\theta_{kl})$
is concave. Similarly, we have
\[
\mathbb{E}[g(\theta)]=\sum_{k,l}p_{kl}g(\theta_{kl})=\sum_{k,l}p_{kl}g^{(l)}(\theta_{kl}).
\]
The final form can be obtained by introducing new variables~$\gamma_{kl}=p_{kl}\theta_{kl}$
for all $k$ and $l$ and choosing to optimize over $\{p_{kl},\gamma_{kl}\}$
instead of $\{p_{kl},\theta_{kl}\}$. Each term in the sum in the
objective function
\[
\sum_{k,l}p_{kl}f^{(k)}(\gamma_{kl}/p_{kl})
\]
is a perspective transform of a concave function $f^{(k)}$ and hence
is concave~\cite[Page~39]{boyd2004convex}. Therefore, the objective
function is concave. Likewise, the term 
\[
\sum_{k,l}p_{kl}g^{(l)}(\gamma_{kl}/p_{kl})
\]
is convex, and corresponding constraint~(\ref{eq:couq_pwcvx_constr})
is also convex. All the other constraints are affine and do not affect
convexity. In conclusion, the final optimization problem is a finite-dimensional
convex problem and is equivalent to the original problem~(\ref{eq:pom_ineq})
due to Lemma~\ref{lem:max-min-achieving}.\qquad\end{proof}
\begin{rem*}
The perspective transformation $p_{kl}f^{(k)}(\gamma_{kl}/p_{kl})$
of $f^{(k)}$, which appears in the objective function~(\ref{eq:prob_couq_main}),
maintains the convexity of $f^{(k)}$. For certain forms of $f^{(k)}$,
the perspective transformation can be directly handled by numerical
optimization software. For example, when $f^{(k)}(x)=-x^{T}Px$ (where
$P$ is a positive semidefinite matrix) is a quadratic form, the perspective
transformation becomes $p_{kl}f^{(k)}(\gamma_{kl}/p_{kl})=-\gamma_{kl}^{T}P\gamma_{kl}/p_{kl}$,
which can be handled directly by the \texttt{quad\_over\_lin()} function
included in CVX~\cite{cvx}.
\end{rem*}

Meanwhile, there are a couple straightforward extensions to Theorem~\ref{thm:couq_main}.

\paragraph{Multiple inequality constraints}

Lemma~\ref{lem:max-min-achieving} and Theorem~\ref{thm:couq_main}
can be generalized to the case of $p>1$ based on a similar proof,
except that the number of Dirac masses becomes~$K\cdot\prod_{i=1,2,\dots,p}L_{i}$.
It can be shown that, among all Dirac masses, there is at most one
Dirac mass~$\theta^{*}$ that achieves maximum at $f^{(k)}$ and
minimum at $g_{1}^{(l_{1})},g_{2}^{(l_{2})},\dots,g_{p}^{(l_{p})}$
for any given indices~$k$ and~$\{l_{i}\}_{i=1}^{p}$:
\[
f(\theta^{*})=f^{(k)}(\theta^{*}),\quad g_{1}(\theta^{*})=g_{1}^{(l_{1})}(\theta^{*}),\quad\dots,\quad g_{p}(\theta^{*})=g_{p}^{(l_{p})}(\theta^{*}).
\]
The corresponding convex optimization problem can be formed by following
a similar procedure as in the proof of Theorem~\ref{thm:couq_main}.

\paragraph{Support constraints}

It is possible to impose certain types of constraints on the support~$\Theta$
without affecting convexity. Specifically, we can allow
\begin{equation}
\Theta=\bigcup_{s=1}^{S}C^{(s)},\label{eq:union_convex}
\end{equation}
where each $C^{(s)}\subseteq\mathbb{R}^{d}$ is a convex set. In order
to show that the corresponding OUQ problem remains convex, we use
the fact that the support constraint~(\ref{eq:pom_support}) is equivalent
to
\begin{equation}
\mathbb{E}[I(\theta\notin\Theta)]\leq0,\label{eq:union_convex_ineq}
\end{equation}
as presented at the beginning of section~\ref{sec:Optimal-uncertainty-quantificati}.
When~$\Theta$ satisfies~(\ref{eq:union_convex}), we have
\begin{equation}
I(\theta\notin\Theta)=\min\{1,I_{\infty}^{0}(\theta\in C^{(1)}),\dots,I_{\infty}^{0}(\theta\in C^{(S)}\},\label{eq:union_convex_ind_func}
\end{equation}
which is piecewise convex. 

If the support constraint~$\theta\in\Theta$ (or equivalently~(\ref{eq:union_convex_ineq}))
is added to an OUQ problem where~$f$ and~$g$ satisfy~(\ref{eq:pw_concave})
and~(\ref{eq:g_p1}), the number of Dirac masses becomes~$K\cdot L\cdot(S+1)$.
Denote these Dirac masses as~$\{p_{kl},\gamma_{kl}\}_{k,l}$ and
$\{p_{kls},\gamma_{kls}\}_{k,l,s}$ ($k=1,2,\dots,K$; $l=1,2,\dots,L$;
$s=1,2,\dots,S$). Then inequality constraint~(\ref{eq:union_convex_ineq})
becomes
\[
\sum_{k,l}\left[p_{kl}+\sum_{s=1}^{S}p_{kls}I_{\infty}^{0}(\gamma_{kls}/p_{kls}\in C^{(s)})\right]\leq0,
\]
where the first term appears due to the constant~$1$ in~(\ref{eq:union_convex_ind_func}).
Recall that $p_{kl}\geq0$ for all $k$ and~$l$. Then we have
\begin{equation}
p_{kl}=0,\qquad\gamma_{kls}/p_{kls}\in C^{(s)},\quad s=1,2,\dots,S\label{eq:union_convex_constr}
\end{equation}
for all~$k$ and~$l$. The fact~$p_{kl}=0$ implies that the actual
number of Dirac masses needed in this case is~$K\cdot L\cdot S$. 

In particular, when each~$C^{(s)}$ is a convex polytope, the corresponding
support constraint has a simpler form. It is known that any convex
polytope can be represented as an intersection of affine halfspaces.
Namely, each~$C^{(s)}$ can be expressed as: 
\[
C^{(s)}=\{\theta\colon A^{(s)}\theta\preceq b^{(s)}\}
\]
for certain constants~$A^{(s)}$ and $b^{(s)}$~(cf.~\cite{bertsimas1997introduction}).
Then the support constraint~(\ref{eq:union_convex_constr}) becomes
affine constraints 
\[
A^{(s)}\gamma_{kls}\preceq p_{kls}b^{(s)},\quad s=1,2,\dots,S
\]
for all~$k$ and~$l$.

\subsection{Dual form\label{sec:Convex-reformulation-dual}}

The convex reformulation of OUQ that appears in Theorem~\ref{thm:couq_main}
can also be derived from the Lagrange dual problem of~(\ref{eq:pom_ineq}).
Derivation from the dual problem is particularly useful in \emph{distributionally
robust stochastic programming}, as studied by Delage and Ye~\cite{delage2010distributionally},
Mehrotra and Zhang~\cite{mehrotra2013models}. First of all, the
Lagrangian of problem~(\ref{eq:pom_ineq}) can be written as 
\begin{align*}
L & =\int f(\theta)\mathcal{D}(\theta)\, d\theta-\lambda^{T}\int g(\theta)\mathcal{D}(\theta)\, d\theta-\nu^{T}\int h(\theta)\mathcal{D}(\theta)\, d\theta\\
 & \qquad+\int\lambda_{p}(\theta)\mathcal{D}(\theta)\, d\theta+\mu\left(1-\int\mathcal{D}(\theta)\, d\theta\right).
\end{align*}
The last two terms are due to the constraints~(\ref{eq:pom_dist_constr}).
From the Lagrangian, the Lagrange dual can be derived as
\[
\sup_{\mathcal{D}}L=\begin{cases}
\mu, & f(\theta)-\lambda^{T}g(\theta)-\nu^{T}h(\theta)+\lambda_{p}(\theta)-\mu=0\textrm{ for all }\theta\\
\infty, & \textrm{otherwise}.
\end{cases}
\]
By including the conditions on the Lagrange multipliers, i.e.,
\[
\lambda\succeq0,\qquad\lambda_{p}(\theta)\geq0,\quad\forall\theta,
\]
we can obtain the the dual problem as
\begin{alignat}{2}
 & \underset{\lambda,\nu,\mu}{\optmin} & \quad & \mu\label{eq:pom_ineq_dual}\\
 & \underset{\phantom{\lambda,\nu,\mu}}{\optst} & \quad & f(\theta)-\lambda^{T}g(\theta)-\nu^{T}h(\theta)-\mu\leq0,\quad\forall\theta\label{eq:pom_ineq_dual_constr}\\
 &  &  & \lambda\succeq0,\nonumber 
\end{alignat}
which is a linear program with an infinite number of constraints (also
known as a semi-infinite linear program). Under standard constraint
qualifications, as shown by Isii~\cite{isii1959method,isii1962sharpness,isii1964inequalities},
strong duality holds so that we can solve the dual problem~(\ref{eq:pom_ineq_dual})
to obtain the optimal value of problem~(\ref{eq:pom_ineq}). Analysis
on strong duality can also be found in Karlin and Studden~\cite{karlin1966tchebycheff},
Akhiezer and Krein~\cite{akhiezer1962some}, Smith~\cite{smith1995generalized},
and Shapiro~\cite{shapiro2001duality}.

The inequality constraint~(\ref{eq:pom_ineq_dual_constr}) implies
that the optimal solution~$(\lambda^{*},\nu^{*},\mu^{*})$ must satisfy
\[
\mu^{*}=\max_{\theta}\left[f(\theta)-\lambda^{*T}g(\theta)-\nu^{*T}h(\theta)\right],
\]
which allows us to eliminate the inequality constraint~(\ref{eq:pom_ineq_dual_constr})
and rewrite problem~(\ref{eq:pom_ineq_dual}) as
\begin{alignat}{2}
 & \underset{\lambda,\nu}{\optmin} & \quad & \max_{\theta}\left[f(\theta)-\lambda^{T}g(\theta)-\nu^{T}h(\theta)\right]\label{eq:pom_ineq_dual-2}\\
 & \underset{\phantom{\lambda,\nu}}{\optst} & \quad & \lambda\succeq0.\nonumber 
\end{alignat}
As it turns out, Theorem~\ref{thm:couq_main} can also be proved
from the dual form~(\ref{eq:pom_ineq_dual-2}). Similar to section~\ref{sec:Convex-reformulation-primal},
we will only prove for the case of~$p=1$ for notational convenience.
First, we present the following lemma for later use in the proof.
\begin{lem}
\label{lem:discrete_to_prob_simplex}Given a set of real-valued functions~$\{f^{(k)}\}_{k=1}^{K}$,
the optimal value of the optimization problem
\begin{alignat}{2}
 & \underset{\{p_{k},\theta_{k}\}_{k=1}^{K}}{\optmax} & \quad & \sum_{k=1}^{K}p_{k}f^{(k)}(\theta_{k})\label{eq:convex_comb_optimization-1}\\
 & \underset{\phantom{\{p_{k},\theta_{k}\}_{k=1}^{K}}}{\optst} & \quad & \sum_{k=1}^{K}p_{k}=1,\qquad p_{k}\geq0,\quad k=1,2,\dots,K\nonumber 
\end{alignat}
 is $\max_{\theta}\max_{k=1,2,\dots,K}\{f^{(k)}(\theta)\}$.\end{lem}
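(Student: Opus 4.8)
The plan is to establish the claimed optimal value by a two-sided sandwich argument, proving separately that the optimal value of problem~(\ref{eq:convex_comb_optimization-1}) is at most and at least $M := \max_{\theta}\max_{k=1,\dots,K} f^{(k)}(\theta)$. Since the lemma makes no assumptions on the functions $f^{(k)}$ (they need not be concave, convex, or even continuous), the argument must rely entirely on the structure of the feasible set, which is the probability simplex in the weights $\{p_k\}$ together with free locations $\{\theta_k\}$.

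For the upper bound, I would note that for any feasible point $\{p_k,\theta_k\}_{k=1}^{K}$ each individual term obeys $f^{(k)}(\theta_k)\le \max_{\theta} f^{(k)}(\theta)\le M$. Because $\sum_{k=1}^{K} p_k = 1$ with all $p_k\ge 0$, the objective $\sum_{k=1}^{K} p_k f^{(k)}(\theta_k)$ is a convex combination of quantities each bounded above by $M$, and is therefore itself at most $M$. This step uses only the simplex constraints and immediately shows that the optimal value cannot exceed $M$.

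For the matching lower bound, I would exhibit a feasible point that attains $M$. Letting $(k^{*},\theta^{*})$ achieve the outer maximum, so that $f^{(k^{*})}(\theta^{*})=M$, I set $p_{k^{*}}=1$ and $p_k=0$ for $k\ne k^{*}$, take $\theta_{k^{*}}=\theta^{*}$, and choose the remaining $\theta_k$ arbitrarily. This point is feasible and yields objective value $f^{(k^{*})}(\theta^{*})=M$. Combined with the upper bound, the optimal value is exactly $M$, as claimed.

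The only point requiring care — and the sole potential obstacle — is attainment of the outer optimum. If the maximum defining $M$ is in fact a supremum that is not achieved, then no degenerate distribution hits $M$ exactly, and the lower-bound construction above must be replaced by an $\epsilon$-approximation: for each $\epsilon>0$ I would pick $(k^{*},\theta^{*})$ with $f^{(k^{*})}(\theta^{*})>M-\epsilon$ and concentrate all mass there, certifying that the optimal value exceeds $M-\epsilon$, and then let $\epsilon\to 0$. Since the lemma is stated with $\max$, this amounts to interpreting the outer optimum consistently on both sides of the argument, and no further difficulty arises.
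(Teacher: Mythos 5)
Your proposal is correct and follows essentially the same route as the paper's own proof: a lower bound via the degenerate feasible point $p_{k^{*}}=1$, $\theta_{k^{*}}=\theta^{*}$, and an upper bound by noting that the objective is a convex combination of terms each bounded by $\max_{\theta}\max_{k}\{f^{(k)}(\theta)\}$. Your closing remark on non-attainment (replacing the construction by an $\epsilon$-approximation when the outer $\max$ is really a supremum) is a careful addition the paper glosses over by writing $\arg\max$, but it does not change the argument.
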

\begin{proof}
Denote the optimal value of problem~(\ref{eq:convex_comb_optimization-1})
as~$\mathrm{OPT}$ and
\[
\theta^{*}=\arg\max_{\theta}\max_{k=1,2,\dots,K}\{f^{(k)}(\theta)\},\qquad k^{*}=\arg\max_{k=1,2,\dots,K}\{f^{(k)}(\theta^{*})\}.
\]
Then we have $\mathrm{OPT}\geq f^{(k^{*})}(\theta^{*})=\max_{\theta}\max_{k=1,2,\dots,K}\{f^{(k)}(\theta)\}$,
since 
\[
p_{k}=\begin{cases}
1, & \quad k=k^{*}\\
0, & \quad\textrm{otherwise},
\end{cases}\qquad\theta_{k}=\theta^{*},\quad\forall k
\]
is a feasible solution of problem~(\ref{eq:convex_comb_optimization-1}),
and its corresponding objective value is~$f^{(k^{*})}(\theta^{*})$.
On the other hand, suppose~$\{p_{k}^{*},\theta_{k}^{*}\}_{k=1}^{K}$
is the optimal solution of problem~(\ref{eq:convex_comb_optimization-1}).
Then we have
\begin{align*}
\mathrm{OPT} & =\sum_{k=1}^{K}p_{k}^{*}f^{(k)}(\theta_{k}^{*})\leq\sum_{k=1}^{K}\left[p_{k}^{*}\max_{k=1,2,\dots,K}\left\{ f^{(k)}(\theta_{k}^{*})\right\} \right]\\
 & =\left(\sum_{k=1}^{K}p_{k}^{*}\right)\cdot\max_{k=1,2,\dots,K}\left\{ f^{(k)}(\theta_{k}^{*})\right\} =\max_{k=1,2,\dots,K}\left\{ f^{(k)}(\theta_{k}^{*})\right\} \\
 & \leq\max_{k=1,2,\dots,K}\left\{ \max_{\theta}f^{(k)}(\theta)\right\} =\max_{\theta}\max_{k=1,2,\dots,K}\left\{ f^{(k)}(\theta)\right\} .
\end{align*}
Therefore, we have~$\mathrm{OPT}=\max_{\theta}\max_{k=1,2,\dots,K}\{f^{(k)}(\theta)\}$.
\qquad\end{proof}

We are now ready to prove Theorem~\ref{thm:couq_main} from the dual
problem~(\ref{eq:pom_ineq_dual-2}).
\begin{proof}
(of Theorem~\ref{thm:couq_main}) For convenience, we define the
objective function in~(\ref{eq:pom_ineq_dual-2}) as
\[
L(\lambda,\nu)=\max_{\theta}\left[f(\theta)-\lambda g(\theta)-\nu^{T}h(\theta)\right],
\]
where~$\lambda$ is now reduced to a scalar in the case of~$p=1$.
Recall that 
\[
f(\theta)=\max_{k=1,2,\dots,K}f^{(k)}(\theta),\qquad g(\theta)=\min_{l=1,2,\dots,L}g^{(l)}(\theta).
\]
Because~$\lambda\geq0$, we have 
\[
L(\lambda,\nu)=\max_{\theta}\max_{k,l}\left\{ f^{(k)}(\theta)-\lambda g^{(l)}(\theta)-\nu^{T}h(\theta)\right\} 
\]
and, by Lemma~\ref{lem:discrete_to_prob_simplex}, 
\[
L(\lambda,\nu)=\max_{\{p_{kl},\theta_{kl}\}_{k,l}}\sum_{k,l}p_{kl}\left[f^{(k)}(\theta_{kl})-\lambda g^{(l)}(\theta_{kl})-\nu^{T}h(\theta_{kl})\right],
\]
where $\{p_{kl}\}$ ($k=1,2,\dots,K$; $l=1,2,\dots,L$) need to satisfy~$\sum_{k,l}p_{kl}=1$
and~$p_{kl}\geq0$ for all $k$ and $l$. Similar to the previous
proof in section~\ref{sec:Convex-reformulation-primal}, we introduce
new variables~$\gamma_{kl}=p_{kl}\theta_{kl}$, and rewrite~$L(\lambda,\nu)$
as 
\[
L(\lambda,\nu)=\max_{\{p_{kl},\gamma_{kl}\}_{k,l}}\sum_{k,l}\left[p_{kl}f^{(k)}(\gamma_{kl}/p_{kl})-\lambda p_{kl}g^{(l)}(\gamma_{kl}/p_{kl})-\nu^{T}p_{kl}h(\gamma_{kl}/p_{kl})\right].
\]
Next, because $f^{(k)}$ is concave and~$g^{(l)}$ is convex for
all $k$ and $l$, and $h$ is affine, if problem~(\ref{eq:pom_ineq_dual-2})
is feasible, then the optimal solution is a saddle point of
\[
\sum_{k,l}\left[p_{kl}f^{(k)}(\gamma_{kl}/p_{kl})-\lambda p_{kl}g^{(l)}(\gamma_{kl}/p_{kl})-\nu^{T}p_{kl}h(\gamma_{kl}/p_{kl})\right].
\]
Therefore, problem~(\ref{eq:pom_ineq_dual-2}) achieves the same
optimal value as the following problem, obtained by exchanging the
order of maximization and minimization:
\begin{alignat}{2}
 & \underset{\{p_{kl},\gamma_{kl}\}}{\optmax} & \quad & \min_{\lambda\geq0,\nu}\sum_{k,l}\left[p_{kl}f^{(k)}(\gamma_{kl}/p_{kl})-\lambda p_{kl}g^{(l)}(\gamma_{kl}/p_{kl})-\nu^{T}p_{kl}h(\gamma_{kl}/p_{kl})\right]\label{eq:pom_ineq_dual-2-1}\\
 & \underset{\phantom{\lambda,\nu,\mu}}{\optst} & \quad & \sum_{k,l}p_{kl}=1,\qquad p_{kl}\geq0,\quad\forall k,l.\nonumber 
\end{alignat}
Using the fact
\begin{align*}
 & \min_{\lambda\geq0,\nu}\sum_{k,l}\left[p_{kl}f^{(k)}(\gamma_{kl}/p_{kl})-\lambda p_{kl}g^{(l)}(\gamma_{kl}/p_{kl})-\nu^{T}p_{kl}h(\gamma_{kl}/p_{kl})\right]\\
 & \qquad=\begin{cases}
\sum_{k,l}p_{kl}f^{(k)}(\gamma_{kl}/p_{kl}), & \quad\sum_{k,l}p_{kl}g^{(l)}(\gamma_{kl}/p_{kl})\leq0,\textrm{ }\sum_{k,l}p_{kl}h(\gamma_{kl}/p_{kl})=0\\
-\infty, & \quad\textrm{otherwise},
\end{cases}
\end{align*}
we can further rewrite problem~(\ref{eq:pom_ineq_dual-2-1}) as problem~(\ref{eq:prob_couq_main})
in Theorem~\ref{thm:couq_main}.
\qquad\end{proof}
\begin{rem*}
The dual form~(\ref{eq:pom_ineq_dual-2}) is often used to derive
a numerically favorable solution for distributionally robust stochastic
programming, in which an optimization problem in the following form
is being solved:
\begin{equation}
\underset{x}{\optmin}\quad\max_{\mathcal{D}\in\Delta_{\mathcal{D}}}\mathbb{E}_{\theta\sim\mathcal{D}}\left[f(x,\theta)\right],\label{eq:drsp}
\end{equation}
where the set $\Delta_{\mathcal{D}}$ is defined by the information
constraints (\ref{eq:pom_ineqconstr})--(\ref{eq:pom_support}). Using
the dual form (\ref{eq:pom_ineq_dual-2}), it can be seen that the
distributionally robust stochastic programming problem (\ref{eq:drsp})
can be rewritten as
\begin{alignat}{2}
 & \underset{x,\lambda,\nu}{\optmin} & \quad & \max_{\theta}\left[f(x,\theta)-\lambda^{T}g(\theta)-\nu^{T}h(\theta)\right]\label{eq:drsp-2}\\
 & \underset{\phantom{x,\lambda,\nu}}{\optst} & \quad & \lambda\succeq0.\nonumber 
\end{alignat}
If $f$ is convex in $x$, then the objective function in (\ref{eq:drsp-2})
is a pointwise maximum of convex functions of $x$, $\lambda$, and
$\nu$, and hence problem (\ref{eq:drsp-2}) is convex. In some cases
(e.g., Delage and Ye~\cite{delage2010distributionally}, Mehrotra
and Zhang~\cite{mehrotra2013models}), $ $the objective function
in (\ref{eq:drsp-2}) can be rewritten using linear matrix inequalities
to permit a more efficient numerical solution.\end{rem*}

\section{Applications\label{sec:Applications}}

In this section, we illustrate the applications of the convex OUQ
framework through a number of examples. In particular, we show that
piecewise concave functions are expressive enough in modeling the
objective function for several different applications.

\subsection{Piecewise convex functions as information constraints}

In the next, we list several examples of piecewise convex functions
that can be used as information constraints.
\begin{example}
[Even-order moments]Consider the case where the random variable~$\theta$
is univariate and the function $g$ in the inequality constraint is
given by $g(\theta)=\theta^{2q}$ (for some positive integer $q$).
It can be verified that $g$ is convex, which is a special case of
piecewise convex functions.
\end{example}
\begin{example}
\label{ex:pwcvx_tail}Consider~$g(\theta)=I(\theta\notin C)$ for
any convex set~$C\in\mathbb{R}^{d}$. The function $g$ can be used
to specify the probability~$\mathbb{P}(\theta\notin C)$, since~$\mathbb{P}(\theta\notin C)=\mathbb{E}[I(\theta\notin C)]$.
The function~$g$ can be rewritten as
\[
g(\theta)=\min\{1,I_{\infty}^{0}(\theta\in C)\},
\]
where the function~$I_{b}^{a}$ is defined as
\begin{equation}
I_{b}^{a}(E)=\begin{cases}
a, & \quad E=\mathrm{true}\\
b, & \quad E=\mathrm{false}
\end{cases}\label{eq:defn_indicator}
\end{equation}
for any~$a,b\in\mathbb{R}$. It can be verified that both $1$ and
$I_{\infty}^{0}(\theta\in C)$ are convex in~$\theta$, and hence
$g$ is piecewise convex.
\end{example}

\subsection{Piecewise concave functions as objectives}

We begin by two simple examples of piecewise concave functions that
are used as objective functions in OUQ.
\begin{example}
[Convex and piecewise affine]When $f^{(k)}$ is affine (hence concave)
for each $k\in\{1,2,\dots,K\}$, the function~$f=\max_{k=1,2,\dots,K}\{f^{(k)}\}$
becomes convex and piecewise affine. This class of functions has been
studied by, e.g., Delage and Ye~\cite{delage2010distributionally}
and Bertsimas et al.~\cite{bertsimas2010models}.
\end{example}
\begin{example}
\label{ex:worst_case_prob_dis_pwc}Consider~$f(\theta)=I(\theta\in C)$
for any convex set~$C\subseteq\mathbb{R}^{d}$. The function $f$
can be used to specify the probability~$\mathbb{P}(\theta\in C)$,
since~$\mathbb{P}(\theta\in C)=\mathbb{E}[I(\theta\in C)]$. The
function $f$ can be rewritten as 
\[
f(\theta)=\max\{0,I_{-\infty}^{1}(\theta\in C)\},
\]
where the the definition of $I_{-\infty}^{1}$ follows from~(\ref{eq:defn_indicator}).
It can be verified that both~$0$ and~$I_{-\infty}^{1}(\theta\in C)$
are concave in~$\theta$, and hence $f$ is piecewise concave.
\end{example}
Aside from the two simple examples presented above, we present in
the following a very important form of objective functions that is
also piecewise concave. 
\begin{example}
[Optimal value of parameterized linear programs]This form of objective
functions is defined by the optimal value of a parameterized linear
program whose constraints contain nonlinear terms of the random variable~$\theta$.
The following is the linear program under consideration:
\begin{alignat}{2}
 & \underset{x}{\optmin} & \quad & c^{T}x\label{eq:prob_opt}\\
 & \underset{\phantom{x}}{\optst} & \quad & Ax\preceq u(\theta),\qquad Hx=v,\nonumber 
\end{alignat}
where $x,c\in\mathbb{R}^{n}$, $v\in\mathbb{R}^{q}$, $A\in\mathbb{R}^{m\times n}$,
$H\in\mathbb{R}^{q\times n}$, and each component of the function
$u\colon\mathbb{R}^{p}\to\mathbb{R}^{m}$ is convex in $\theta$. 

For any fixed $\theta$, denote the optimal value of problem~(\ref{eq:prob_opt})
by $f(\theta)$. We assume that there exists a nonempty set $\Theta\subseteq\mathbb{R}^{p}$
such that problem~(\ref{eq:prob_opt}) is feasible for all $\theta\in\Theta$,
i.e., $f(\theta)<\infty$ for all $\theta\in\Theta$. We will show
that $f$ is piecewise concave on $\Theta$. Namely, $f$ can be expressed
as
\[
f(\theta)=\max_{k\in\{1,2,\dots,K\}}f^{(k)}(\theta)
\]
for some $K$ and concave functions $\{f^{(k)}\}_{k=1}^{K}$. First,
we consider the Lagrange dual problem of problem~(\ref{eq:prob_opt}),
which is given as follows:
\begin{alignat}{2}
 & \underset{\lambda,\mu}{\optmax} & \quad & -\lambda^{T}u(\theta)-\mu^{T}v\label{eq:prob_opt-1}\\
 & \underset{\phantom{\lambda,\mu}}{\optst} & \quad & A^{T}\lambda+H^{T}\mu+c=0,\qquad\lambda\succeq0.\nonumber 
\end{alignat}
Since problem~(\ref{eq:prob_opt}) is feasible, strong duality between
problems~(\ref{eq:prob_opt}) and~(\ref{eq:prob_opt-1}) holds,
so that $f(\theta)$ is also the optimal value of problem~(\ref{eq:prob_opt-1}).
Notice that problem~(\ref{eq:prob_opt-1}) is a linear program for
any given $\theta$. Since the optimal value of a linear program can
always be achieved at a vertex of the constraint set, we can rewrite
$f$ as 
\[
f(\theta)=\max_{k\in\{1,2,\dots,K\}}\{-\lambda_{k}^{T}u(\theta)-\mu_{k}^{T}v\},
\]
where $\{(\lambda_{k},\mu_{k})\}_{k=1}^{K}$ is the set of vertices
of the polytope 
\[
\{(\lambda,\mu)\colon A^{T}\lambda+H^{T}\mu+c=0,\ \lambda\succeq0\}.
\]
Since $\lambda_{k}\succeq0$ for all $k\in\{1,2,\dots,K\}$, and each
component of $u$ is convex, we know that $-\lambda_{k}^{T}u-\mu_{k}^{T}v$
is concave, which implies that $f$ is piecewise concave.
\end{example}
In the following, we give two applications in which the objective
function is defined through the optimal value of a linear program.

\paragraph{Revenue maximization with stochastic supplies}

We start with a simple application that cannot be handled by previous
convex formulations of OUQ. Consider a scenario where a merchant would
like to estimate the expected revenue from selling (divisible) goods
to $K$ potential customers, whereas the quantity of the goods follows
an unknown probability distribution. We assume that the merchant can
only choose to sell the goods exclusively to one of the $K$ customers.
Each customer offers a different price, and the merchant tries to
maximize the revenue by selling to the one who offers the best price.
In addition, price from each customer drops as the quantity increases,
which makes the revenue a nonlinear function of the quantity. This
model can capture the situation in which the customer gradually loses
interest in purchasing as the quantity increases. Eventually, a customer
stops purchasing beyond a certain maximum quantity, at which point
the merchant can no longer increase the revenue by selling to that
customer. 

We denote by $\theta$ the total quantity of goods. We denote by $f^{(k)}(\theta)\in\mathbb{R}$
the corresponding revenue of selling $\theta$ quantity of goods to
customer $k$. In the following, we shall assume that $f^{(k)}$ is
concave. Then, the maximum revenue of selling $\theta$ quantity of
goods is given by 
\[
f(\theta)=\max_{k=1,2,\dots,K}f^{(k)}(\theta),
\]
which is piecewise concave by definition. We can also write $f$ (as
a function of $\theta$) as the optimal value of the following optimization
problem:
\begin{alignat}{2}
 & \underset{\mu}{\optmin} & \quad & \mu\label{eq:prob-revenue}\\
 & \underset{}{\optst} & \quad & \mu\geq f^{(k)}(\theta),\quad k=1,2,\dots,K.\nonumber 
\end{alignat}
It can be verified that problem (\ref{eq:prob-revenue}) has the same
form as problem~(\ref{eq:prob_opt}), which also implies that $f$
is piecewise concave.

\paragraph{DC optimal power flow with stochastic demands}

Consider a power network modeled as a graph $\mathcal{G}=(\mathcal{N},\mathcal{E})$,
where $\mathcal{N}$ is the set of nodes, and $\mathcal{E}$ is the
set of edges. We assume that a subset $\mathcal{S}\subseteq\mathcal{N}$
of the nodes are capable of generating electric power to supply other
loads in the network. The power generation at any node $i\in\mathcal{S}$
is denoted by $s_{i}\in\mathbb{R}$. For convenience, we assume $s_{i}=0$
for all $i\in\mathcal{N}\backslash\mathcal{S}$. We assume that the
power demand at any node $i\in\mathcal{N}$ is stochastic, and the
demand can be modeled by a function $d_{i}(\theta)$ that depends
on some random variable $\theta$. As an example, the power demand
that comes from the usage of air conditioning depends on the ambient
temperature, which can be modeled as a random variable. Specifically,
we can define $\theta\in\mathbb{R}^{|\mathcal{N}|}$ as the vector
of ambient temperatures at different nodes (corresponding to different
geographical regions), so that $d_{i}$ depends on the temperature
$\theta_{i}$ at node $i$. We shall assume that the demand function
$d_{i}$ is concave (in fact, also monotonic) in $\theta_{i}$ for
all $i\in\mathcal{N}$.

We consider a simplified DC power flow model of the network adopted
from Stott et al.~\cite{stott2009dc}. Denote by $q_{ij}\in\mathbb{R}$
the power flow from node $i\in\mathcal{N}$ to node $j\in\mathcal{N}$.
The power flow is determined by the difference in the voltage angles
$\alpha_{i},\alpha_{j}\in\mathbb{R}$ of $i$ and $j$ as follows:
\begin{equation}
q_{ij}=B_{ij}(\alpha_{i}-\alpha_{j}),\label{eq:opf-qij}
\end{equation}
where $B_{ij}=B_{ji}$ is the susceptance of the transmission line
between $i$ and $j$. The amount of power flow is also limited by
the transmission capacity $\bar{q}_{ij}\in\mathbb{R}_{+}$ of the
line connecting nodes $i$ and $j$:
\begin{equation}
-\bar{q}_{ij}\leq q_{ij}\leq\bar{q}_{ij}.\label{eq:opf-qij_bound}
\end{equation}
The net supply at node $i\in\mathcal{N}$ is given by
\begin{equation}
p_{i}=\sum_{j:(j,i)\in\mathcal{E}}q_{ji}+s_{i}.\label{eq:opf-pi}
\end{equation}
We require that there should be enough net supply to satisfy the demand
at any node $i\in\mathcal{N}$, which implies
\begin{equation}
d_{i}(\theta)\leq p_{i},\quad i\in\mathcal{N}.\label{eq:opf-di}
\end{equation}
The goal of the DC optimal power flow problem is to minimize the total
generation cost while respecting the constraints (\ref{eq:opf-qij})--(\ref{eq:opf-di}).
We assume that the generation cost $c_{i}$ at node $i$ can be modeled
by a convex and piecewise affine function:
\[
c_{i}(s_{i})=\max_{r=1,2,\dots,R_{i}}\{a_{ir}^{T}s_{i}+b_{ir}\},
\]
where $R_{i}$ and $\{a_{ir},b_{ir}\}_{r=1}^{R_{i}}$ are given constants.
Then, the DC optimal power flow problem can be cast as an optimization
problem as follows:
\begin{alignat*}{2}
 & \underset{}{\optmin} & \quad & \sum_{i\in\mathcal{S}}c_{i}(s_{i})\qquad\text{(over \ensuremath{s_{i},q_{ij},\alpha_{i},p_{i}})}\\
 & \underset{}{\optst} & \quad & \text{\eqref{eq:opf-qij}--\eqref{eq:opf-di}.}
\end{alignat*}
We can rewrite the above optimization problem as a linear program
by introducing additional slack variables (also denoted by $c_{i}$,
in an abuse of notation):
\begin{alignat}{2}
 & \underset{}{\optmin} & \quad & \sum_{i\in\mathcal{S}}c_{i}\qquad\text{(over \ensuremath{c_{i},s_{i},q_{ij},\alpha_{i},p_{i}})}\label{eq:opf_final}\\
 & \underset{}{\optst} & \quad & \text{\eqref{eq:opf-qij}--\eqref{eq:opf-di}}\nonumber \\
 &  &  & c_{i}\geq a_{ir}^{T}s_{i}+b_{ir},\quad i\in\mathcal{S},\ r=1,2,\dots,R_{i}.\nonumber 
\end{alignat}
It can be verified that problem~(\ref{eq:opf_final}) has the same
form as problem~(\ref{eq:prob_opt}), and hence its optimal value
is a piecewise concave function of $\theta$.

\section{Numerical examples\label{sec:Examples}}

This section demonstrates the numerical aspect of convex OUQ through
two examples. The first example compares bounds obtained by asymmetric
and incomplete information using convex OUQ and the algorithm introduced
by Bertsimas and Popescu~\cite{bertsimas2005optimal}. The second
example follows from the application in revenue maximization (with
stochastic supplies) given in section~\ref{sec:Applications}; in
particular, it presents a scenario where convex OUQ is (up to the
authors knowledge) the only applicable convex formulation, since previous
formulations do not handle arbitrary piecewise concave objective functions.

\paragraph{Bound on the tail of Gaussian distributions}

This example applies convex OUQ and the algorithm introduced by Bertsimas
and Popescu~\cite{bertsimas2005optimal} in order to compute an upper
bound of~$\mathbb{P}(\theta\geq a)$ (where~$a$ is a given constant)
in presence of incomplete and asymmetric information.

To apply convex OUQ, we assume that we are given the constraints 
\[
\mathbb{E}[\theta]=M_{1},\qquad\mathbb{E}[\theta^{2}]\leq M_{2},\qquad\mathbb{E}[|\theta|]\leq M_{1}^{+}.
\]
To apply Bertsimas and Popescu~\cite{bertsimas2005optimal} (which
has not been designed to incorporate the constraint~$\mathbb{E}[|\theta|]\leq M_{1}^{+}$),
we assume that we are given the constraints 
\[
\mathbb{E}[\theta^{q}]=M_{q},\quad q=1,2,\dots,Q
\]
for some fixed~$Q$. All the constants, including~$\{M_{q}\}_{q=1}^{Q}$
and~$M_{1}^{+}$, are computed from the standard Gaussian distribution~$\mathcal{N}(0,1)$.

Table~\ref{tab:optval_pwc} lists the results obtained from the two
algorithms. As a baseline of comparison, it also lists the exact value
of~$\mathbb{P}(\theta\geq x)$ as computed by numerically evaluating
the integral
\[
\frac{1}{\sqrt{2\pi}}\int_{x}^{\infty}\exp(-\theta^{2}/2)\, d\theta.
\]
It can be seen that the above integral is related to the complementary
error function $\mathrm{erfc}(\cdot)$ as follows:
\[
\frac{1}{\sqrt{2\pi}}\int_{x}^{\infty}\exp(-\theta^{2}/2)\, d\theta=\frac{1}{2}\mathrm{erfc}(x/\sqrt{2}),
\]
where the complementary error function $\mathrm{erfc}(\cdot)$ is
defined as: 
\[
\mathrm{erfc}(x)\triangleq\frac{2}{\sqrt{\pi}}\int_{x}^{\infty}\exp(-\theta^{2})\, d\theta.
\]
The algorithm by Bertsimas and Popescu eventually gives a tighter
bound by using more moment-based information, since it is capable
of incorporating equality constraints. On the other hand, when higher
moment ($Q\geq6$) information is unavailable, convex OUQ gives a
better bound by being able to handle more types of constraints such
as~$\mathbb{E}[|\theta|]\leq M_{1}^{+}$. In terms of computational
complexity, the algorithm by Bertsimas and Popescu requires solving
a semidefinite program, whereas our convex OUQ formulation only requires
solving a second-order cone program. It can be seen from Table~\ref{tab:optval_pwc}
that the computational time of our convex OUQ formulation is also
comparable to the algorithm by Bertsimas and Popescu.

\begin{table}[tbph]

\begin{center}
\begin{tabular}{|c|c|c|}
\hline 
Method & Upper bound for $\mathbb{P}(\theta\geq a)$ & CPU time (in seconds, 100 runs)\tabularnewline
\hline 
B \& P ($Q=2$) & $0.6400$ & $8.29$\tabularnewline
\hline 
B \& P ($Q=4$) & $0.6074$ & $11.05$\tabularnewline
\hline 
B \& P ($Q=6$) & $0.4964$ & $14.21$\tabularnewline
\hline 
convex OUQ ($Q=2$) & $0.5319$ & $11.50$\tabularnewline
\hline 
Exact value & $0.2266$ & ---\tabularnewline
\hline 
\end{tabular}
\end{center}
\caption{Upper bounds obtained by convex OUQ and the algorithm in Bertsimas
and Popescu (B \& P)~\cite{bertsimas2005optimal}. The exact value
is also listed for reference. In all results, the constant $a=0.75$.
The CPU time is measured on a laptop computer equipped with a dual-core
2.5 GHz Intel Core i5 processor and 4 GB of RAM. All the optimization
problems are solved in MATLAB (R2012b) using CVX (Version 2.1, Build
1079) with the Mosek solver (Version 7.0.0.106). \label{tab:optval_pwc}}
\end{table}

\paragraph{Revenue estimation}

This numerical example follows from problem (\ref{eq:prob-revenue})
presented in section~\ref{sec:Applications}. For simplicity, this
example considers 3 customers, so that the revenue~$f$ can be defined
as
\[
f(\theta)=\max_{k=1,2,3}f^{(k)}(\theta),
\]
where~$f^{(k)}$ is the revenue due to customer~$k$, and $\theta$
is the (random) quantity of the goods. We choose
\[
f^{(k)}(\theta)=\begin{cases}
a_{k}(\theta-b_{k})^{2}+c_{k}, & \quad\theta\leq b_{k}\\
c_{k}, & \quad\theta>b_{k},
\end{cases}
\]
where~$a_{k}<0$ models the rate at which the price from customer~$k$
drops, $b_{k}$ is the maximum quantity that customer~$k$ is willing
to purchase, and~$c_{k}$ is maximum revenue from customer~$k$.
It can be verified that each~$f^{(k)}$ is concave, and thus~$f$
is piecewise concave. The functions~$f$ and each~$f^{(k)}$ are
plotted in Figure~\ref{fig:pw_concave_revenue} based on the parameters
used in this example.

\begin{figure}
\centering{}\includegraphics[width=0.5\textwidth]{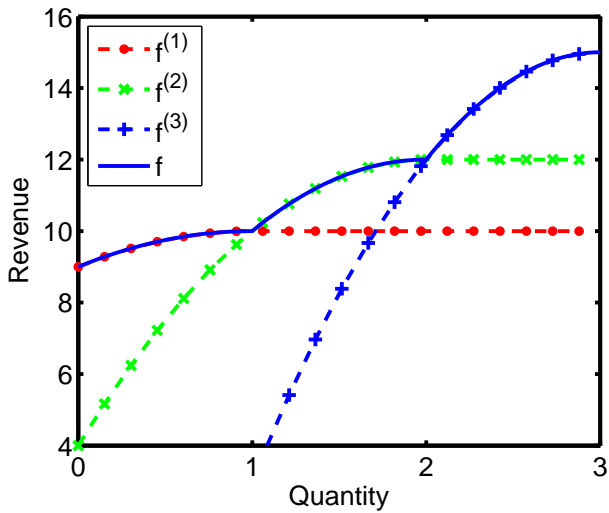}\caption{Solid: best revenue from all customers. Dashed: revenue from individual
customers. The parameters are: $a=(-1,-2,-3)$, $b=(1,2,3)$, and~$c=(10,12,15)$.}
\label{fig:pw_concave_revenue}
\end{figure}

The information constraints include constraints on the first and second
moments, i.e., 
\[
\mathbb{E}[\theta]=\mu,\qquad\mathbb{E}[\theta^{2}]\leq\mu^{2}+\sigma^{2},
\]
and tail probabilities
\[
\mathbb{P}(\theta\leq\theta_{L})\leq\delta_{L},\qquad\mathbb{P}(\theta\geq\theta_{H})\leq\delta_{H}.
\]
These information constraints specify the first and second order statistics
on the (random) quantity of the goods $\theta$, as well as the probabilities
that the quantity $\theta$ goes below a given lower bound $\theta_{L}$
or above a given upper bound $\theta_{H}$, respectively. As we mentioned
in Example~\ref{ex:pwcvx_tail}, both~$I(\theta\leq\theta_{L})$
and~$I(\theta\geq\theta_{H})$ are piecewise convex, and each can
be constructed from 2 convex functions. Therefore, we have~$K=3,$
$L_{1}=2$, and~$L_{2}=2$. Consequently, the total number of Dirac
masses is~$K\cdot L_{1}\cdot L_{2}=12$. 

The corresponding convex optimization problem that solves for an upper
bound of the expected revenue~$\mathbb{E}[f(\theta)]$ is
\begin{alignat*}{2}
 & \underset{\{p_{k,l_{1},l_{2}},\gamma_{k,l_{1},l_{2}}\}_{k,l_{1},l_{2}}}{\optmax} & \quad & \sum_{k,l_{1},l_{2}}p_{k,l_{1},l_{2}}f^{(k)}(\gamma_{k,l_{1},l_{2}}/p_{k,l_{1},l_{2}})\\
 & \underset{\phantom{\{p_{k,l_{1},l_{2}},\gamma_{k,l_{1},l_{2}}\}_{k,l_{1},l_{2}}}}{\optst} & \quad & \sum_{k,l_{1},l_{2}}p_{k,l_{1},l_{2}}=1,\qquad p_{k,l_{1},l_{2}}\geq0,\quad\forall k,l_{1},l_{2}\\
 &  &  & \sum_{k,l_{1},l_{2}}\gamma_{k,l_{1},l_{2}}=\mu\\
 &  &  & \sum_{k,l_{1},l_{2}}\gamma_{k,l_{1},l_{2}}^{2}/p_{k,l_{1},l_{2}}\leq\mu^{2}+\sigma^{2}\\
 &  &  & p_{k,1,l_{2}}\leq\delta_{L},\quad\gamma_{k,2,l_{2}}\geq p_{k,2,l_{2}}\theta_{L},\quad\forall k,l_{2}\\
 &  &  & p_{k,l_{1},1}\leq\delta_{H},\quad\gamma_{k,l_{1},2}\leq p_{k,l_{1},2}\theta_{H},\quad\forall k,l_{1}.
\end{alignat*}
Figure~\ref{fig:expected_revenue} shows the effect of changing the
second moment and the tail probabilities. As expected, loosening the
constraints (i.e., increasing either~$\sigma$ or $\delta_{L}$)
increases the upper bound of~$\mathbb{E}[f(\theta)]$. In the case
of changing the second moment (Figure~\ref{fig:expected_revenue_sigma}),
the upper bound stops increasing beyond a certain point, which implies
that the information constraint on the second moment is no longer
active.

\begin{figure}
\hspace*{\fill}
\subfigure[]{\centering\includegraphics[width=0.4\textwidth]{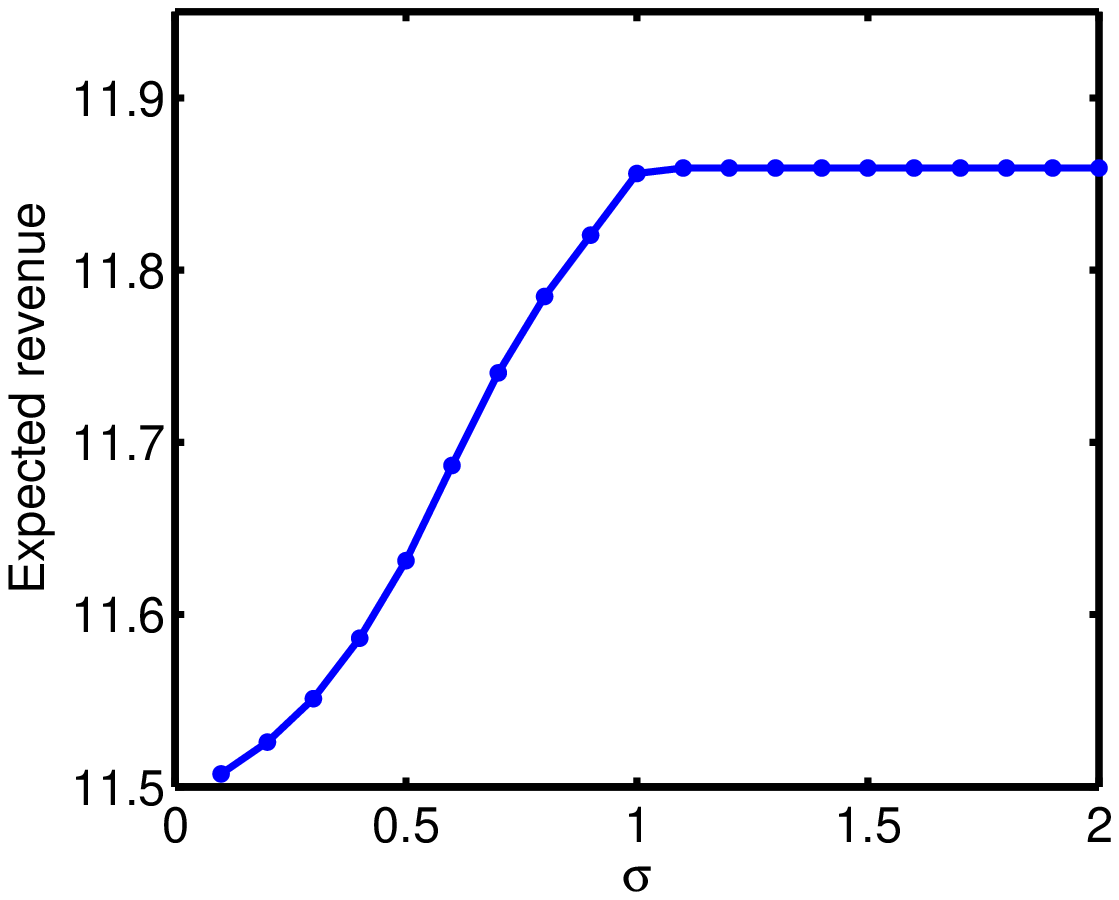}\label{fig:expected_revenue_sigma}}
\hspace*{\fill}
\subfigure[]{\centering\includegraphics[bb=0bp 0bp 331bp 219bp,width=0.4\textwidth]{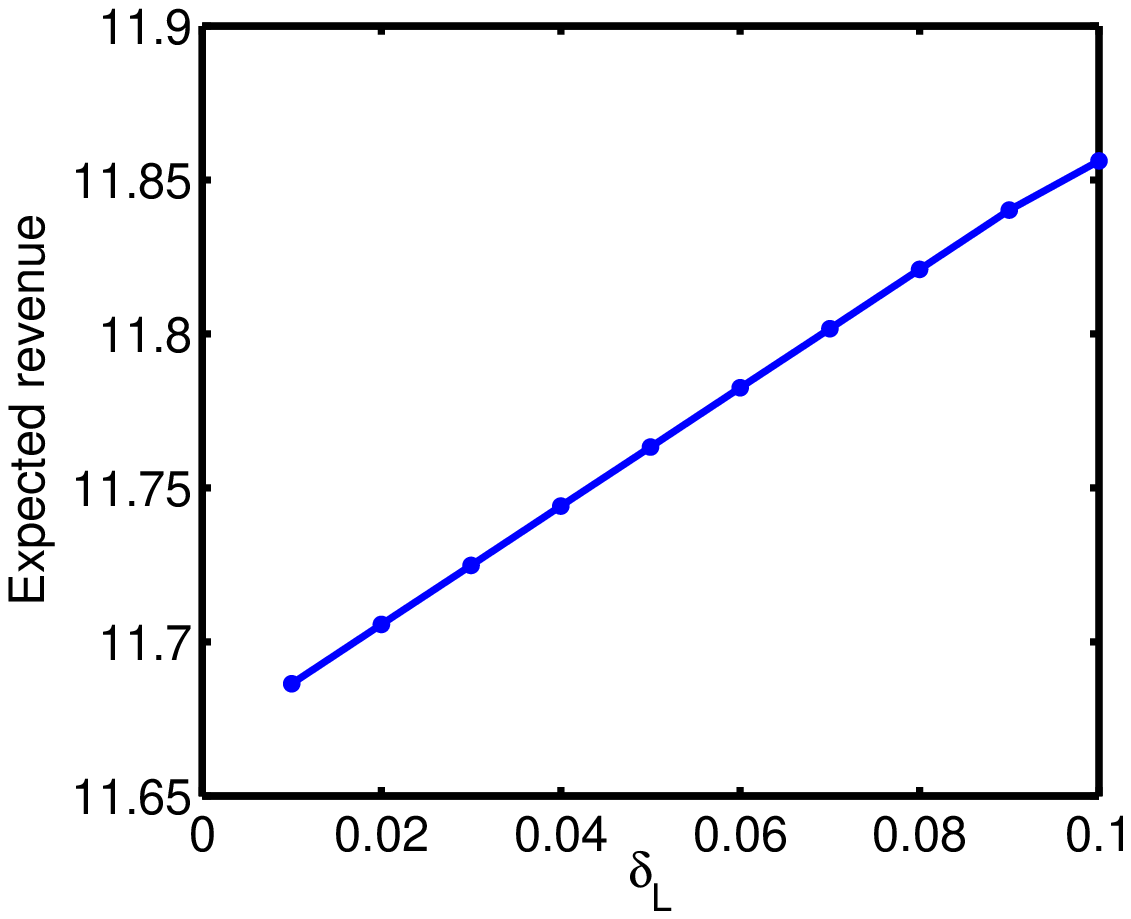}}
\hspace*{\fill}
\caption{Upper bound of the expected revenue computed by convex OUQ. (a) Dependence
on the standard deviation~$\sigma$; (b) Dependence on the (one-side)
tail probability~$\delta_{L}$.}
\label{fig:expected_revenue}
\end{figure}

\section{Conclusions}

This paper introduces the following new sufficient conditions under
which an OUQ problem can be reformulated as a convex optimization
problem: (1) the objective is piecewise concave, (2) the inequality
information constraints are piecewise convex, and (3) the equality
constraints are affine. Constraints on the support of the probability
distribution can also be incorporated, as long as the support is a
finite union of convex sets. We prove the result based on two different
approaches, which start with the primal and the dual forms of the
original OUQ problem, respectively. Through a number of examples,
we also illustrate the use of the convex OUQ formulation in several
different applications, such as estimating the maximum revenue of
selling goods to customers and the operational cost of power systems.

\section*{Acknowledgment}
This work was supported in part by NSF grant CNS-0931746 and AFOSR grant FA9550-12-1-0389.

\bibliographystyle{abbrv}
\bibliography{ref}

\end{document}